\newtheorem{thm}{Theorem} 
\newtheorem*{thm*}{Theorem} 
\newtheorem{prop}[thm]{Proposition}
\newtheorem{lem}[thm]{Lemma}
\newtheorem{cor}[thm]{Corollary}
\theoremstyle{definition}
\newtheorem{definition}[thm]{Definition}
\newtheorem{expl}[thm]{Example}
\newtheorem{rem}[thm]{Remark}
\DeclareMathOperator{\R}{\mathbb{R}}
\DeclareMathOperator{\C}{\mathbb{C}}
\DeclareMathOperator{\N}{\mathbb{N}}
\DeclareMathOperator{\Spec}{\text{Spec}}
\DeclareMathOperator{\conc}{\mathbin{\ast}}
\DeclareMathOperator{\Tr}{\text{Tr}}
\DeclareMathOperator{\RG}{\text{RG}}
    \DeclareFontFamily{U}{wncy}{}
    \DeclareFontShape{U}{wncy}{m}{n}{<->wncyr10}{}
    \DeclareSymbolFont{mcy}{U}{wncy}{m}{n}
    \DeclareMathSymbol{\Sha}{\mathord}{mcy}{"58}
\numberwithin{equation}{section}
\renewcommand{\i}{\mathrm{i}}
\DeclareSymbolFont{bbold}{U}{bbold}{m}{n}
\DeclareSymbolFontAlphabet{\mathbbold}{bbold}
\newcommand{\ind}{\bm{1}}
\title{Joins of circulant matrices}
  \author{ Jacqueline \DJ o\`{a}n, J\'an Min\'a\v{c}, Lyle Muller,
  \\ Tung T. Nguyen, Federico W. Pasini}
\date{\today}
\begin{document}
\maketitle
\begin{abstract}
We study the spectrum of the join of several circulant matrices. We apply our results to compute explicitly the spectrum of certain graphs obtained by joining several circulant graphs.\\
\\
\noindent \textbf{Keywords.} Circulant matrix, Eigenspectra, Graph theory, Graph join\\
\noindent \textbf{MSC Codes.} 15B05, 15A18 
\end{abstract}

\section{Introduction} 
Circulant matrices provide a nontrivial, elegant, and simple set of objects in matrix theory. They appear quite naturally in many problems in spectral graph theory (see \cite{[G1]}, \cite{[G2]}, \cite{[G3]}, \cite{[G4]}, \cite{[G5]}, \cite{[G6]}) and non-linear dynamics (see  \cite{[K]}, \cite{[LMN]}, \cite{[Townsend]}). The Circulant Diagonalization Theorem describes the eigenspectrum and eigenspaces of a circulant matrix explicitly via the discrete Fourier transform. Consequently, many problems involving circulant matrices have closed-form or analytical solutions.

For example, in many applications, a natural model of a network is a \textit{ring graph}, in which nodes are regularly placed along a circle and, for a fixed number $m$, each node is connected to its $m$ closest neighbours on each side. Networks such as this can be represented by adjacency matrices which are circulant, which opens the possibility for exact solutions for problems involving the structure or dynamics of these networks. More generally, a graph which has a circulant adjacency matrix with respect to a suitable ordering of the vertices is called a \textit{circulant graph.}

Many real-world networks, however, display structure beyond that of circulant networks. For example, networks may be composed of several smaller modules, joined together in some way (see the final section for a particular example). From both a theoretical and an applied perspective, it is interesting and important to study the spectra of graphs obtained by joining together smaller subgraphs.

The combination of these previous observations naturally led us to investigate the spectrum of networks composed of several circulant graphs. While in general it is impossible to relate the spectrum of a graph with the spectra of its subgraphs, joins of circulant graphs provide an exception. Here we present a study of these spectra, and some applications. These results can provide analytical insight into the dynamics of composite networks (see e.g.\cite{[KO4]}), which will be the subject of future work.

More precisely, we generalize the Circulant Diagonalization Theorem to the \textit{joins of several circulant matrices}, by which we mean matrices of the shape
\begin{equation}
    \label{eq:d-join}\tag{$\ast$}
    A=\left(\begin{array}{c|c|c|c}
C_1 & a_{1,2}\ind & \cdots & a_{1,d}\ind \\
\hline
a_{2,1}\ind & C_2 & \cdots & a_{2,d}\ind \\
\hline
\vdots & \vdots & \ddots & \vdots \\
\hline
a_{d,1}\ind & a_{d,2}\ind & \cdots & C_d
\end{array}\right),
\end{equation}
where, for each $1 \leq i,j \leq d$, $C_i$ is a circulant matrix of size $k_i \times k_i$ (with complex entries), and $a_{i,j}\ind$ is a $k_i \times k_j$ matrix with all entries equal to a constant $a_{i,j}\in\mathbb{C}$. We remark that, to simplify notation, $\ind$ is used as the common symbol for all matrices with all entries equal to $1$, independently of their sizes. However no confusion should occur as the submatrices $a_{i,j} \ind$ are uniquely determined. 

Our main theorem is
\begin{thm*}\label{thm:main thm}
The spectrum of a matrix $A$ as in \eqref{eq:d-join} is the union of the following multisets 
\[ \Spec\left(\overline{A}\right) \cup \bigcup_{i=1}^{d} \left\{ \lambda^{C_i}_{j}| 1 \leq j \leq k_i-1 \right\}, \] 
where $\overline{A}$ is an explicit $d \times d$ matrix, whose entries are the row sums of the blocks of $A$, and the $\lambda^{C_i}_{j}$'s are the eigenvalues of each circulant block $C_i$, except for the eigenvalue given by the row sum. 
Furthermore, a generalized eigenbasis of $A$ can be directly obtained from eigenbases of the circulant blocks and a generalized eigenbasis of $\overline{A}$. In particular, $A$ is diagonalizable if and only if $\overline{A}$ is.
\end{thm*}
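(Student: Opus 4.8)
The plan is to exhibit an explicit $A$-invariant direct-sum decomposition of $\C^{k}$, where $k=\sum_i k_i$, that separates the ``circulant interior'' from the ``join skeleton'' $\overline{A}$. Write a vector of $\C^k$ in block form $w=(w_1,\dots,w_d)$ with $w_i\in\C^{k_i}$. The one computation driving everything is that a constant block $a_{i,j}\ind$ acts on $w_j$ only through the sum of its entries: $\ind\, w_j=\big(\sum_\ell (w_j)_\ell\big)\,\ind_{k_i}$, where $\ind_{k_i}$ now denotes the all-ones \emph{vector}. First I would decompose $\C^k=V_0\oplus V_1$, where $V_0$ is spanned by the $d$ block vectors having $\ind_{k_i}$ in slot $i$ and zeros elsewhere, and $V_1$ consists of all $w$ with $\sum_\ell(w_i)_\ell=0$ for every $i$. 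By the Circulant Diagonalization Theorem, for each $i$ the vector $\ind_{k_i}$ is an eigenvector of $C_i$ with eigenvalue the common row sum $r_i$, while the remaining Fourier vectors (those indexed $j=1,\dots,k_i-1$) span the zero-sum subspace; so $V_1$ is exactly the span of these ``nonconstant'' Fourier vectors across all blocks.

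Next I would check both summands are $A$-invariant. For $w\in V_1$ all off-diagonal contributions vanish because $\ind\,w_j=0$, so $Aw=(C_1w_1,\dots,C_dw_d)$; and since a circulant matrix has all column sums equal to $r_i$, we get $\ind_{k_i}^{\!\top}C_iw_i=r_i\,\ind_{k_i}^{\!\top}w_i=0$, keeping $Aw$ in $V_1$. For $w\in V_0$, writing $w_i=c_i\ind_{k_i}$, the identities $C_i\ind_{k_i}=r_i\ind_{k_i}$ and $\ind(c_j\ind_{k_j})=c_j k_j\,\ind_{k_i}$ show that the $i$-th block of $Aw$ equals $\big(r_i c_i+\sum_{j\neq i}a_{i,j}k_j c_j\big)\ind_{k_i}$. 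Hence in the basis $\{\ind_{k_i}\}$ the operator $A|_{V_0}$ is represented by the $d\times d$ matrix $\overline{A}$ with $\overline{A}_{ii}=r_i$ and $\overline{A}_{ij}=a_{i,j}k_j$ for $i\neq j$---precisely the row sums of the blocks of $A$. Meanwhile $A|_{V_1}$ is block diagonal, the $i$-th block being $C_i$ restricted to its zero-sum subspace, whose spectrum is $\{\lambda^{C_i}_j:1\le j\le k_i-1\}$.

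From this the spectrum claim is immediate, since $\Spec(A)=\Spec(A|_{V_0})\cup\Spec(A|_{V_1})$. For the eigenbasis, the nonconstant Fourier vectors supply $k-d$ genuine eigenvectors spanning $V_1$, while a generalized eigenbasis of $\overline{A}$ pushes forward through the isomorphism $\C^d\xrightarrow{\sim}V_0$, $c\mapsto\sum_i c_i\ind_{k_i}$, which intertwines $\overline{A}$ with $A|_{V_0}$, to a generalized eigenbasis of $A|_{V_0}$. Finally, because $A$ respects the decomposition $V_0\oplus V_1$, its minimal polynomial is the least common multiple of those of $A|_{V_0}$ and $A|_{V_1}$; as $A|_{V_1}$ is always diagonalizable, $A$ is diagonalizable exactly when $A|_{V_0}\cong\overline{A}$ is, which is the last assertion.

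I do not expect a serious obstacle: the structure is essentially forced once the decomposition is in place. The point requiring the most care is the generalized eigenbasis statement when $\overline{A}$ fails to be diagonalizable, where I must transport an entire generalized eigenvector chain of $\overline{A}$ along the intertwiner $c\mapsto\sum_i c_i\ind_{k_i}$ rather than argue eigenvalue by eigenvalue, and then verify that the lifted chains together with the Fourier vectors of $V_1$ really constitute a basis of $\C^k$. Getting the off-diagonal factor $k_j$ in $\overline{A}$ correct---it arises because $\ind_{k_j}$ has $k_j$ entries---is the other easy-to-slip detail.
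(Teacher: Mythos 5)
Your proposal is correct, and while it manipulates exactly the same objects as the paper, it organizes them around a structure the paper never makes explicit: the $A$-invariant splitting $\C^n=V_0\oplus V_1$. Your nonconstant Fourier vectors are the paper's circulant eigenvectors $w_{i,j}$ of Proposition \ref{prop:circulant eigenvectors}, and your intertwiner $c\mapsto\sum_i c_i\ind_{k_i}$ is the paper's tensor expansion $v\mapsto v^\otimes$; the identity $\left[(\overline A-\lambda I)v\right]^\otimes=(A-\lambda I)v^\otimes$ that drives your transport of generalized eigenvectors appears there verbatim, with the lifting done by induction on the exponent $m$ rather than by conjugating through the similarity $A|_{V_0}=T\,\overline A\,T^{-1}$. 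The genuine divergence is in how completeness is established: the paper never proves invariance of the zero-block-sum subspace $V_1$ (your column-sum observation), so it must verify by hand that the circulant eigenvectors together with the lifted vectors $u_1^\otimes,\dots,u_d^\otimes$ are linearly independent --- via a span-intersection argument against the blockwise-constant subspace (which is precisely your $V_0$), supported by a Laplace-expansion determinant computation in the style of Proposition \ref{prop:diag} --- and then concludes with a dimension count. Your route makes that independence automatic, since $V_0\cap V_1=\{0\}$ and each family is a basis of its summand; it yields the multiset form of the spectrum immediately from $p_A=p_{A|_{V_0}}\cdot p_{A|_{V_1}}$, which the paper instead extracts as the corollary identifying the reduced characteristic polynomial with $p_{\overline A}$; and it gives the diagonalizability criterion cleanly from the least-common-multiple of minimal polynomials together with the unconditional diagonalizability of $A|_{V_1}$. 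Two small remarks: once you know the Fourier vectors spanning $V_1$ are eigenvectors of $A$, invariance of $V_1$ follows for free, so the column-sum verification, though pleasant, is redundant; and the worry you flag at the end about transporting Jordan chains dissolves in your own framework, since a vector-space isomorphism intertwining $\overline A$ with $A|_{V_0}$ carries any generalized eigenbasis to a generalized eigenbasis wholesale, with no chain-by-chain care needed.
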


This theorem completely solves our main problem of characterization of spectrum of the join of $d$ circulant matrices. We note that the methods in this article can be generalized to a wider class of matrices, namely normal matrices with constant row sums. This extension will be discussed in a separate paper in preparation.

The structure of this article is as follows. In Section $2$, we illustrate the join of two circulant matrices. This serves as a motivation for our study as well as to guide the readers to the more general case. In Section $3$, we give the complete proof of the main theorem, which consists of several steps. First, we show how to extend eigenvectors of a circulant block to eigenvectors of the join. Secondly, we show that the generalized eigenspaces of $\overline{A}$ lift to the generalized eigenspaces of $A$. Finally, we prove that the collection of (generalized) eigenvectors for $A$, obtained from the previous two processes, form a generalized eigenbasis. In Section $4$, we discuss some applications of our results to spectral graph theory. In the final section, we use the main theorem to study the dynamics of networks of coupled oscillators. Specifically, we construct a family of networks of Kuramoto oscillators with non-trivial equilibrium points.

\section{Motivation: the join of two circulant matrices}\label{sec:2 circulant}

A special instance of \textit{joining circulant matrices} arises when we study the removal of one (directed) cycle from a complete graph. Recall that the \textit{complete graph} of size $n$, denoted $K_n$, is the simple graph with an edge between any two distinct nodes. Its adjacency matrix $A$ is given by 
\[ A_{ij} = \begin{cases} 0 & \text{ if } i = j \\ 1 & \text{otherwise.} \end{cases}\]
Moreover, a (directed) cycle of length $k$, or $k$-cycle, denoted $C_k$, is the simple graph on $k$ nodes, in which the nodes can be ordered in such a way that each node is connected only with the subsequent one, and the last one only with the first one. Its adjacency matrix $A$ is given by
\[ A_{ij} = \begin{cases} 1 & \text{ if } j=i+1 \text{ or } (i,j)=(k,1) \\ 0 & \text{otherwise.} \end{cases}\]
Finally, the complement of a graph $G$ is the graph $G^c$ with the same vertices as $G$ and which has the edge between two distinct vertices if and only if that edge is not in $G$. In other words, the adjacency matrix $A^c$ of $G^c$ is related to the adjacency matrix $A$ of $G$ by $A^c=\mathbf{1}-I-A$, where $\mathbf{1}$ is a square matrix of ones and $I$ is an identity matrix, of suitable size.
In particular, the adjacency matrices of complete graphs, cycles, and complements of cycles are all circulant.

We illustrate the general phenomenon of cycle removal on a small concrete example. Let us remove a $3$-cycle $C_3$ from the complete graph $K_{8}$ with $8$ nodes, and call the resulting graph $\mathcal{K} = K_8-C_3$. We choose to remove the cycle \[(1,2),(2,3),(3,1)\] which is highlighted in red in the figure below.
\begin{figure}[H]
\centering
\includegraphics[scale=0.8]{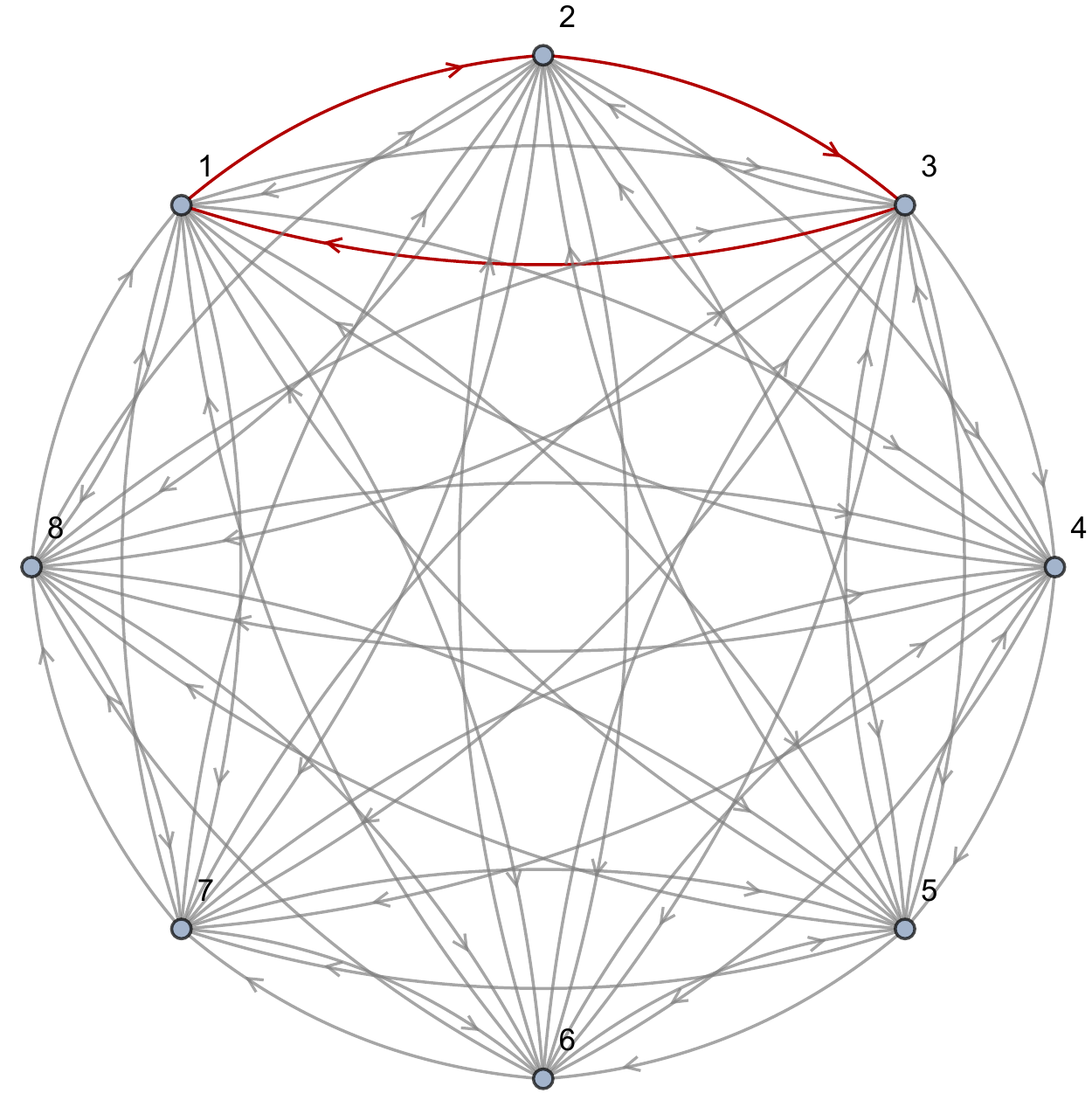}
\caption{The complete graph $K_8$ with a cycle of length $3$ removed.}
\label{fig:complete}
\end{figure}

We note that removing any other cycle of length 3 would produce an isomorphic graph. Another representation of this graph is depicted in the figure below. We have two circulant graphs $G$ and $H$ (in green and grey respectively)
and all nodes from each ring graph are adjacent to all nodes of the other ring graph. This is an instance of the \textit{join of two circulant graphs}, which we will define in Section 4.
\begin{figure}[H]
\centering
\includegraphics[width=0.5\linewidth]{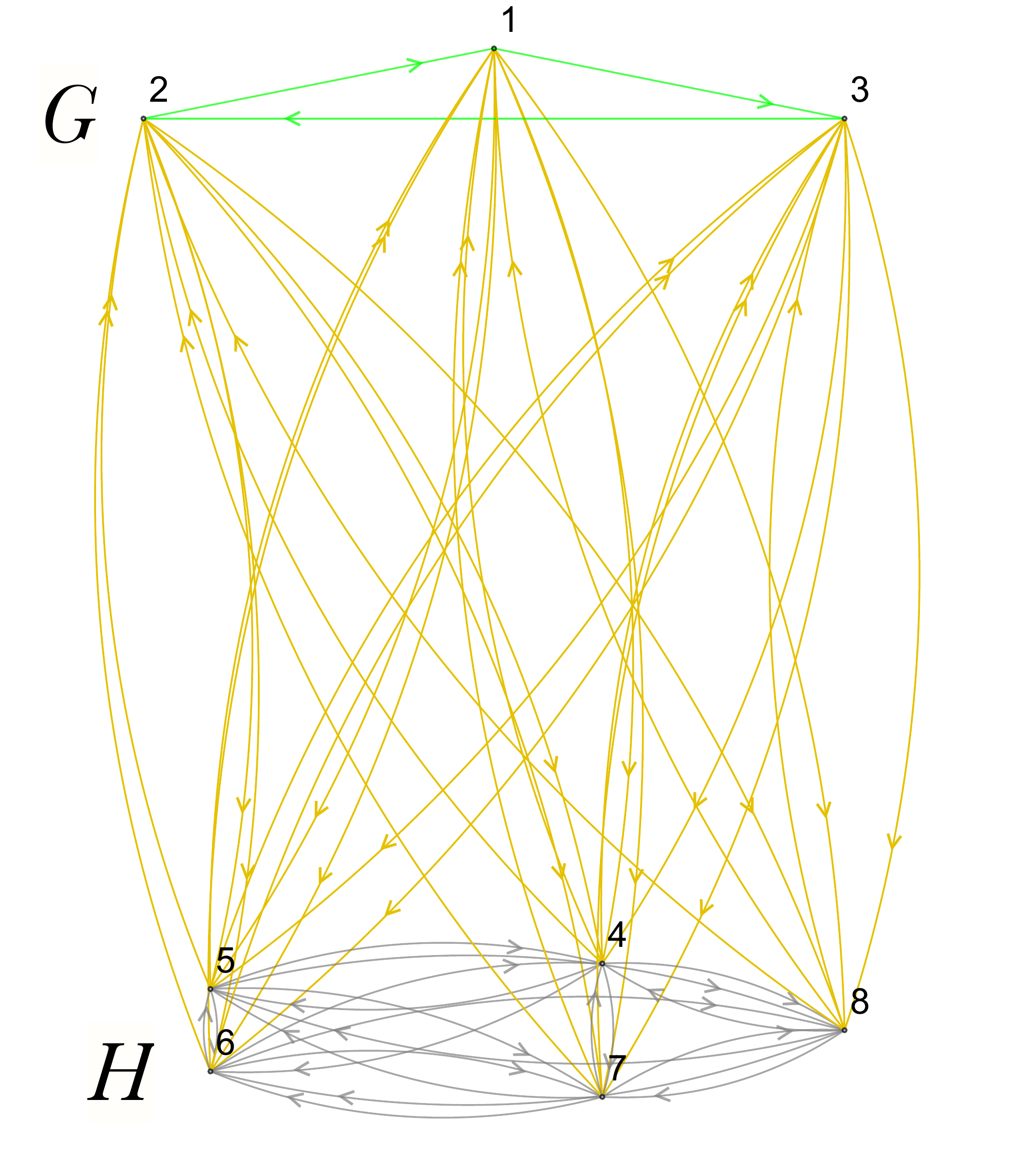}
\caption{The join of two circulant graphs $G$ and $H$.}
\label{fig:join}
\end{figure}
In matrix terms, the adjacency matrix of $\mathcal{K}$ is a block matrix, with circulant diagonal blocks and $1$ everywhere else.
\[
\left( \begin{array}{ccc | ccccc }
 0 & 0 & 1 & 1 & 1 & 1 & 1 & 1 \\
 1 & 0 & 0 & 1 & 1 & 1 & 1 & 1 \\
 0 & 1 & 0 & 1 & 1 & 1 & 1 & 1 \\
 \hline
 1 & 1 & 1 & 0 & 1 & 1 & 1 & 1 \\
 1 & 1 & 1 & 1 & 0 & 1 & 1 & 1 \\
 1 & 1 & 1 & 1 & 1 & 0 & 1 & 1 \\
 1 & 1 & 1 & 1 & 1 & 1 & 0 & 1 \\
 1 & 1 & 1 & 1 & 1 & 1 & 1 & 0 \\
\end{array} \right)
\]
The position of the eigenvalues of the adjacency matrix of $\mathcal{K}$ in the complex plane highlights a nontrivial interplay between the eigenvalues of the two circulant blocks, which motivates our investigations of the eigenspectra of joins of circulant matrices.
%
\begin{figure}[H]
\centering
\includegraphics[width=\linewidth]{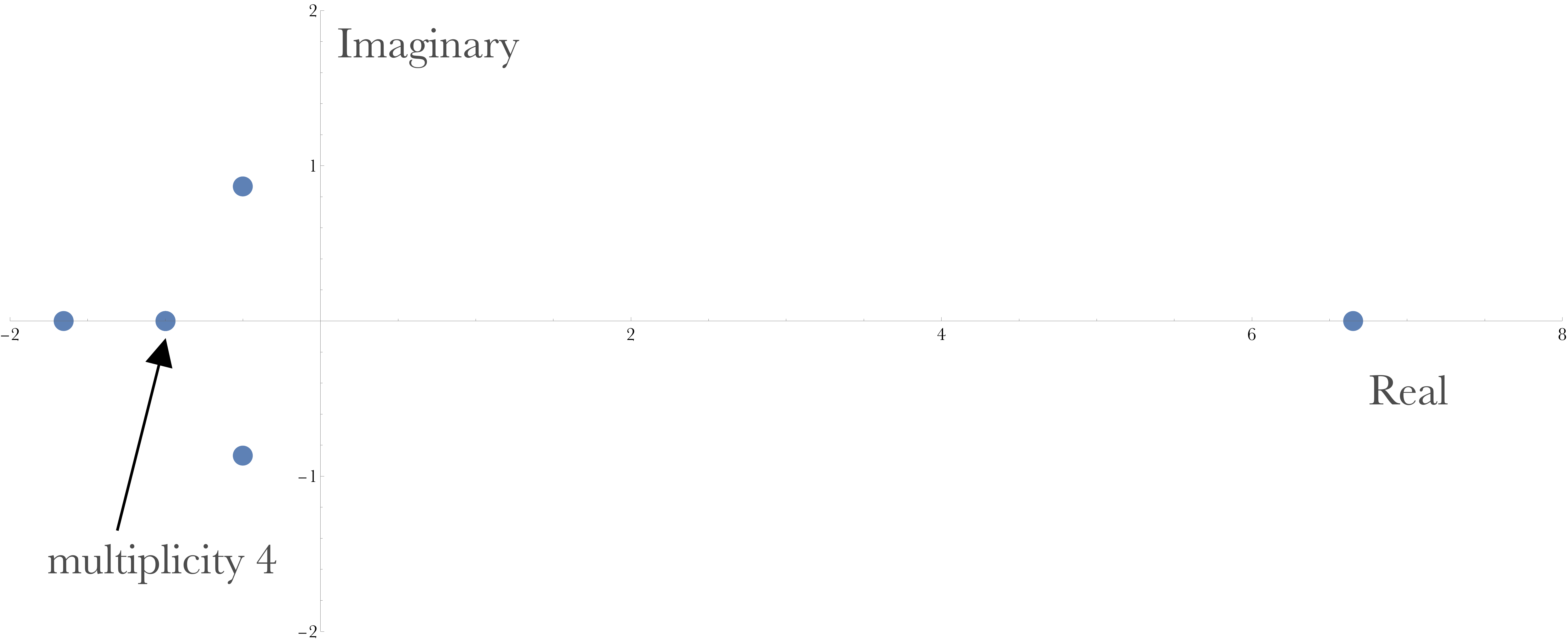}
\caption{The eigenvalues of the resulting graph.}
\label{fig:eigen}
\end{figure}

It is worth noticing that the eigenvalues for the graph $\mathcal{K}$ have been obtained through the software \texttt{Mathematica}, but in the course of the paper we will derive analytical expressions for them.

To begin our investigation, we recall the Circulant Diagonalization Theorem (see \cite{[Davis]} for a more thorough discussion about circulant matrices). 
In the following, $\omega_k$ denotes a fixed primitive $k$-th root of unity.
\begin{thm}[{Circulant Diagonalization Theorem, \cite{[Davis]}}] \label{prop: CDT}
Let 
\[ C={\begin{pmatrix}c_{0}&c_{k-1}&\cdots &c_{2}&c_{1} \\c_{1}&c_{0}&c_{k-1} &&c_{2}\\\vdots &c_{1}&c_{0}&\ddots &\vdots \\c_{k-2}&&\ddots &\ddots &c_{k-1}\\c_{k-1}&c_{k-2}&\cdots &c_{1}&c_{0}\\\end{pmatrix}}=Circ(c_0,c_1,\dots,c_{k-1}) \]
be the circulant matrix formed by the vector $(c_0,c_1,\dots,c_{k-1})^T\in\mathbb{C}^k$. Let 
\[ {\displaystyle v_{k,j}=\left(1,\omega_k ^{j},\omega_k ^{2j},\ldots ,\omega_k ^{(k-1)j}\right)^T,\quad j=0,1,\ldots ,k-1} .\] 
Then $v_{k,j}$ is an eigenvector of $C$ associated with the eigenvalue 
\[ \lambda _{j}=c_{0}+c_{k-1}\omega_k ^{j}+c_{k-2}\omega_k ^{2j}+\dots +c_{1}\omega_k ^{(k-1)j} \] 

\end{thm}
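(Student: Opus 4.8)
The plan is to verify the eigenvalue equation $C v_{k,j} = \lambda_j v_{k,j}$ directly, coordinate by coordinate, since the statement asserts only that each $v_{k,j}$ is an eigenvector with the displayed eigenvalue. First I would record the entrywise description of a circulant matrix. Reading off the displayed shape of $C$ and indexing rows and columns by $0,1,\dots,k-1$, the entry in row $r$ and column $s$ is $C_{r,s} = c_{(r-s)\bmod k}$; correspondingly, the $s$-th coordinate of the candidate eigenvector is $(v_{k,j})_s = \omega_k^{sj}$. Establishing this indexing convention cleanly is the one place where care is needed, because the coefficients $c_0,\dots,c_{k-1}$ appear in \emph{reversed} order along each row.

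With this notation in hand, I would compute the $r$-th coordinate of the product:
\[
(C v_{k,j})_r = \sum_{s=0}^{k-1} c_{(r-s)\bmod k}\,\omega_k^{sj}.
\]
The key manoeuvre is the substitution $t = (r-s)\bmod k$: as $s$ runs over a complete residue system modulo $k$, so does $t$, and since $\omega_k^{sj}$ depends only on $s \bmod k$ one may replace it by $\omega_k^{(r-t)j}$. This rewrites the sum as
\[
(C v_{k,j})_r = \omega_k^{rj}\sum_{t=0}^{k-1} c_t\,\omega_k^{-tj} = (v_{k,j})_r\cdot \sum_{t=0}^{k-1} c_t\,\omega_k^{-tj}.
\]
Because the scalar $\sum_{t=0}^{k-1} c_t\,\omega_k^{-tj}$ is independent of the coordinate $r$, this already exhibits $v_{k,j}$ as an eigenvector, and it remains only to match this scalar with the claimed $\lambda_j$.

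To identify the eigenvalue explicitly, I would use $\omega_k^k = 1$ to write $\omega_k^{-tj} = \omega_k^{(k-t)j}$ and then read off the terms: the summand $t=0$ contributes $c_0$, the summand $t=k-1$ contributes $c_{k-1}\omega_k^{j}$, and in general $t = k-\ell$ contributes $c_{k-\ell}\,\omega_k^{\ell j}$. Collecting these reproduces exactly the stated expression $\lambda_j = c_0 + c_{k-1}\omega_k^{j} + c_{k-2}\omega_k^{2j} + \dots + c_1\omega_k^{(k-1)j}$, completing the verification.

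I do not expect a serious obstacle: the whole argument is a single bookkeeping calculation, and the only subtlety is the index reversal, which is precisely what the change of variable $t = (r-s)\bmod k$ absorbs. As a cleaner structural alternative I could instead write $C = \sum_{m=0}^{k-1} c_m P^m$, where $P = \mathrm{Circ}(0,1,0,\dots,0)$ is the basic cyclic shift; a one-line check gives $P v_{k,j} = \omega_k^{-j} v_{k,j}$, so each $v_{k,j}$ is simultaneously an eigenvector of every power $P^m$, and evaluating the polynomial $C$ in $P$ yields the eigenvalue $\sum_m c_m \omega_k^{-mj}$ with no indices to track. Finally, although the statement claims only the eigenpairs, I would remark that $v_{k,0},\dots,v_{k,k-1}$ are the columns of a Vandermonde matrix in the distinct nodes $1,\omega_k,\dots,\omega_k^{k-1}$ and hence form a basis of $\mathbb{C}^k$, which is what justifies the word ``diagonalization'' in the name.
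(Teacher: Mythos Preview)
Your verification is correct. Note, however, that the paper does not actually prove this theorem: it is quoted from \cite{[Davis]} as background, with no argument given in the text (an illustrative ``proof by picture'' for $k=6$ appears only inside an \texttt{\textbackslash iffalse} block and is thus suppressed). So there is no in-paper proof to compare against; your direct coordinate computation, and the alternative via $C=\sum_m c_m P^m$, are both standard routes to the result. Your closing remark that the $v_{k,j}$ form a Vandermonde basis is exactly what the paper records in the Remark immediately following the theorem statement.
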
 
\begin{rem}\label{rem:Fourier modes}
For any choice of $k\in\N\setminus\{0\}$, the vectors $v_{k,0},\dots,v_{k,k-1}$ are linearly independent. This can be seen by noticing that the matrix formed by the vectors is a Vandermonde matrix.
\end{rem}

In the following, the operator $\conc$ denotes vector concatenation:
\[
(x_1,\dots,x_m)^T\conc (y_1,\dots,y_n)^T = (x_1,\dots,x_m, y_1,\dots,y_n)^T
\]
\begin{prop}
Let $C$ be a $k \times k$ circulant matrix, $D$ be any $(n-k) \times (n-k)$ matrix, let $\bm{1}_{k_1,k_2}$ denote the $k_1\times k_2$ matrix entirely made of ones, and let $A$ be the $n \times n$ matrix
\[ A= \begin{pmatrix}
C & \bm{1}_{k, n-k} \\ 
\bm{1}_{n-k, k} & D
\end{pmatrix}. \]
For $1 \leq j \leq k-1$ let 
\[ w_j= (1, \omega_k^j, \omega_{k}^{2j}, \ldots, \omega_{k}^{(k-1)j}, \underbrace{0, \ldots, 0}_{n-k\text{ zeros}} )^T=v_{k,j}\conc \vec{0}_{n-k}.\] 
 Then $w_j$ is an eigenvector of $A$ associated with the eigenvalue 
\[ \lambda _{j}^C=c_{0}+c_{k-1}\omega _k^{j}+c_{k-2}\omega_k ^{2j}+\dots +c_{1}\omega_k ^{(k-1)j} \] 
\end{prop}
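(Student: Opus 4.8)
The plan is to verify directly that $A w_j = \lambda_j^C w_j$ by computing the matrix-vector product blockwise, exploiting the fact that $w_j$ is the concatenation of a nonconstant Fourier mode $v_{k,j}$ with a zero vector. Writing $w_j = v_{k,j}\conc \vec{0}_{n-k}$, I would split the product into its top $k$ entries and its bottom $n-k$ entries according to the block structure of $A$.

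First I would handle the top block. The top $k$ entries of $Aw_j$ equal $C v_{k,j} + \bm{1}_{k,n-k}\,\vec{0}_{n-k}$. The second term vanishes, so the top entries are simply $C v_{k,j}$. By the Circulant Diagonalization Theorem (Theorem \ref{prop: CDT}), $v_{k,j}$ is an eigenvector of $C$ with eigenvalue $\lambda_j^C = c_0 + c_{k-1}\omega_k^j + \dots + c_1 \omega_k^{(k-1)j}$, so $C v_{k,j} = \lambda_j^C v_{k,j}$. This matches the top $k$ entries of $\lambda_j^C w_j$ exactly.

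Next I would handle the bottom block. The bottom $n-k$ entries of $Aw_j$ equal $\bm{1}_{n-k,k}\, v_{k,j} + D\,\vec{0}_{n-k}$. The second term vanishes, and the first term is the $(n-k)$-dimensional vector each of whose entries is the sum $\sum_{\ell=0}^{k-1}\omega_k^{\ell j}$ of all the coordinates of $v_{k,j}$. Since $1 \leq j \leq k-1$, the quantity $\omega_k^{j}$ is a $k$-th root of unity different from $1$, and the geometric series $\sum_{\ell=0}^{k-1}\omega_k^{\ell j} = \frac{\omega_k^{kj}-1}{\omega_k^{j}-1} = 0$. Hence the bottom entries of $Aw_j$ are all zero, which agrees with the bottom $n-k$ entries of $\lambda_j^C w_j$, since those entries of $w_j$ are zero to begin with.

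Combining the two blocks gives $A w_j = \lambda_j^C w_j$, establishing the claim. I do not expect any genuine obstacle here: the entire argument hinges on the vanishing of the row sum $\sum_{\ell=0}^{k-1}\omega_k^{\ell j}$ for $j \neq 0$, which is precisely the orthogonality of nontrivial Fourier modes against the all-ones vector. The only point requiring care is the restriction to $1 \leq j \leq k-1$ (excluding $j=0$), since the padding with zeros is what decouples the circulant block from the rest of the matrix, and this decoupling works exactly because the constant-block rows see only the vanishing coordinate sum.
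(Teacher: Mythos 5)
Your proof is correct and follows essentially the same approach as the paper: a direct blockwise computation of $Aw_j$, using the Circulant Diagonalization Theorem for the top block and the vanishing geometric series $\sum_{\ell=0}^{k-1}\omega_k^{\ell j}=0$ (valid precisely because $1\leq j\leq k-1$) for the bottom block. The paper's own proof is identical in substance, so there is nothing to add.
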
 

\begin{proof}
When we directly calculate $Aw_j$ we see that the first $k$ elements of this vector are $C v_{k,j}$ and the remaining $n-k$ elements are equal to the sum 
\[t_j =\sum_{i=0}^{k-1} \omega_{k}^{i j}. \]

In other words, we have 
\[ Aw_j= Cv_{k,j} \conc  \underbrace{(t_{j} , t_{j}, \ldots, t_{j})^T}_{\text{$n-k$ terms}}.=\lambda^C_j v_j   \conc \underbrace{(t_{j} , t_{j}, \ldots, t_{j})^T}_{\text{$n-k$ terms}}\] 
Since, for $1 \leq j \leq k-1$,
\[ t_j =\sum_{i=0}^{k-1} \omega_{k}^{i j}=\frac{(\omega_{k}^j)^k-1}{\omega_k^j-1}=0,\]
it follows that
\( Aw_j=\lambda_j \omega_j.\) We conclude that $w_j$, $1 \leq j \leq k-1$, are eigenvectors of $A$ with associated eigenvalue $\lambda_{j}^C$ as asserted. 
\end{proof}

If $D$ is also circulant, $D= Circ(d_0, d_1, \ldots, d_{k_2-1})$ with $k_2=n-k$, an analogous argument applies. 
In summary, recalling Remark \ref{rem:Fourier modes} for the claim on linear independence, we have proved the following statement. 
\begin{prop} \label{prop:block_circulant}
Let $A$ be a $(k_1+k_2) \times (k_1+k_2)$ matrix of the form 
\[ A= \begin{pmatrix}
C & \bm{1}_{k_1, k_2} \\ 
\bm{1}_{k_2, k_1} & D
\end{pmatrix}, \]
with $C=Circ(c_0,\dots,c_{k_1-1})$ and $D=Circ(d_0,\dots,d_{k_2-1})$ circulant matrices of dimension $k_1 \times k_1$ and $k_2 \times k_2$ respectively. For $1 \leq j \leq k_1-1$ let 
\[ w_j= (1, \omega_{k_1}^j, \omega_{k_1}^{2j}, \ldots, \omega_{k_1}^{(k_1-1)j}, \underbrace{0 \ldots, 0} _{\text{$k_2$ zeros}})^T=v_{k_1,j}\conc \vec{0}_{k_2}  .\] 

 Then $w_j$ is an eigenvector of $A$ associated with the eigenvalue 
\[ \lambda _{j}^C=c_{0}+c_{k_1-1}\omega _{k_1}^{j}+c_{k_1-2}\omega_{k_1} ^{2j}+\dots +c_{1}\omega_{k_1} ^{(k_1-1)j}. \] 
For $1 \leq j \leq k_2-1$, let 
\[ z_j= (\underbrace{0, \ldots, 0}_{\text{$k_1$ zeros}}  ,1, \omega_{k_2}^j, \omega_{k_2}^{2j}, \ldots, \omega_{k_2}^{(k_2-1)j})^T = \vec{0}_{k_1} \conc v_{k_2,j}  .\] 
Then $z_j$ is an eigenvector associated with the eigenvalue 
\[ \lambda _{j}^D=d_{0}+d_{k_2-1}\omega _{k_2}^{j}+d_{k_2-2}\omega_{k_2} ^{2j}+\dots +d_{1}\omega_{k_2} ^{(k_2-1)j} \] 

Furthermore, the system of $k_1+k_2-2$ eigenvectors $\{w_{j} \}_{j=1}^{k_1-1} \cup \{z_j \}_{j=1}^{k_2-1}$ is linearly independent. 

\end{prop}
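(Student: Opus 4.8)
The plan is to obtain the two eigenvector claims directly from the previous Proposition and then to prove linear independence by exploiting the disjoint supports of the two families together with Remark \ref{rem:Fourier modes}. First I would note that the assertion about the $w_j$ is exactly the previous Proposition applied with circulant block $C$ and with $D$ playing the role of the arbitrary bottom-right block: it gives that $w_j=v_{k_1,j}\conc\vec{0}_{k_2}$ is an eigenvector of $A$ with eigenvalue $\lambda_j^C$ for $1\le j\le k_1-1$. For the $z_j$ I would invoke the same result after interchanging the two diagonal blocks: conjugating $A$ by the permutation that swaps the first $k_1$ and the last $k_2$ coordinates produces a matrix of the identical shape but with the circulant block $D$ in the top-left corner, so the previous Proposition now yields that $z_j=\vec{0}_{k_1}\conc v_{k_2,j}$ is an eigenvector with eigenvalue $\lambda_j^D$ for $1\le j\le k_2-1$. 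Equivalently, one just repeats the short direct computation, using that the off-diagonal blocks are all-ones and that the character sums $\sum_{i=0}^{k_2-1}\omega_{k_2}^{ij}$ vanish for $1\le j\le k_2-1$.

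The real content is the linear independence of the combined system, and the key structural observation is that the $w_j$ are supported on the first $k_1$ coordinates whereas the $z_j$ are supported on the last $k_2$ coordinates, so the two families occupy complementary coordinate blocks. Concretely, I would start from a vanishing linear combination
\[ \sum_{j=1}^{k_1-1}\alpha_j w_j + \sum_{j=1}^{k_2-1}\beta_j z_j = \vec{0}, \]
and then read off the first $k_1$ coordinates and the last $k_2$ coordinates separately. The first block yields $\sum_{j=1}^{k_1-1}\alpha_j v_{k_1,j}=\vec{0}$; since the full Fourier family $v_{k_1,0},\dots,v_{k_1,k_1-1}$ is linearly independent by Remark \ref{rem:Fourier modes}, so is its subfamily $v_{k_1,1},\dots,v_{k_1,k_1-1}$, forcing every $\alpha_j=0$. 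The last block yields $\sum_{j=1}^{k_2-1}\beta_j v_{k_2,j}=\vec{0}$, and the same Vandermonde argument forces every $\beta_j=0$.

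I do not expect a serious obstacle here; the argument is essentially immediate once the earlier results are in hand. The only point that warrants explicit mention is the decoupling: because the two families live in complementary coordinate blocks, the independence question splits into two independent Vandermonde statements, each settled by Remark \ref{rem:Fourier modes}. I would also remark that discarding the $j=0$ modes $v_{k_1,0}$ and $v_{k_2,0}$ (whose eigenvalues are the row sums, handled separately) is harmless, since removing vectors from a linearly independent set preserves independence.
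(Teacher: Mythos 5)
Your proposal is correct and takes essentially the same route as the paper, which likewise obtains the $w_j$ claim from the preceding Proposition, handles the $z_j$ by the ``analogous argument'' (the same direct computation with the vanishing character sums), and settles linear independence by appeal to Remark~\ref{rem:Fourier modes}. Your explicit block-decoupling of a vanishing linear combination into two Vandermonde statements simply spells out what the paper leaves implicit in citing that remark.
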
 

In order to find the two remaining eigenvalues and corresponding eigenvectors of the matrix $A$, we introduce an auxiliary matrix. 
\begin{prop}
Keeping the notation of the previous proposition, let $C_{s}=\sum_{i=0}^{k_i-1} c_i$ be the sum of each row in $C$, and similarly let $D_{s}=\sum_{i=0}^{k_2-1} d_i$. Let us consider the $2 \times 2$ matrix 
\[ \overline A=\begin{pmatrix}
   C_{s} & k_2 \\
   k_1  &  D_s 
\end{pmatrix} .\] 

Let $(x,y) \in \C^2$ be an eigenvector for $\overline A$ with respect to an eigenvalue $\lambda$. Then 
\[v= (\underbrace{x, x, \ldots, x}_{\text{$k_1$ terms}},  \underbrace{y, y, \ldots, y}_{\text{$k_2$ terms}})^T\] 
is an eigenvector of $A$ with respect to the eigenvalue $\lambda$.

\end{prop}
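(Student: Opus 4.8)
The plan is to verify the eigenvector equation $Av = \lambda v$ directly, the point being that vectors which are constant on each of the two blocks form an $A$-invariant subspace on which $A$ acts through the $2 \times 2$ matrix $\overline{A}$. First I would write $v = x\,\bm{1}_{k_1,1} \conc y\,\bm{1}_{k_2,1}$, where $\bm{1}_{k,1}$ denotes the all-ones column vector of length $k$, and expand $Av$ using the block decomposition of $A$. The top block of $Av$ is $C(x\,\bm{1}_{k_1,1}) + \bm{1}_{k_1,k_2}(y\,\bm{1}_{k_2,1})$, and the bottom block is $\bm{1}_{k_2,k_1}(x\,\bm{1}_{k_1,1}) + D(y\,\bm{1}_{k_2,1})$.

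The essential observation is that each of these four products collapses to a scalar multiple of an all-ones vector. Since $C$ is circulant, its all-ones vector $\bm{1}_{k_1,1} = v_{k_1,0}$ is an eigenvector with eigenvalue $c_0 + c_{k_1-1} + \cdots + c_1 = C_s$; this is simply the $j = 0$ case of Theorem \ref{prop: CDT}, as $\omega_{k_1}^0 = 1$. Hence $C(x\,\bm{1}_{k_1,1}) = C_s x\,\bm{1}_{k_1,1}$, and likewise $D(y\,\bm{1}_{k_2,1}) = D_s y\,\bm{1}_{k_2,1}$. For the off-diagonal blocks, multiplying the all-ones matrix $\bm{1}_{k_1,k_2}$ by the constant vector $y\,\bm{1}_{k_2,1}$ sums $k_2$ copies of $y$ in each coordinate, giving $k_2 y\,\bm{1}_{k_1,1}$; symmetrically $\bm{1}_{k_2,k_1}(x\,\bm{1}_{k_1,1}) = k_1 x\,\bm{1}_{k_2,1}$.

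Collecting the terms, the top block of $Av$ equals $(C_s x + k_2 y)\,\bm{1}_{k_1,1}$ and the bottom block equals $(k_1 x + D_s y)\,\bm{1}_{k_2,1}$. The pair of scalar coefficients is exactly $\overline{A}(x,y)^T$, which by hypothesis equals $\lambda(x,y)^T$. Substituting $C_s x + k_2 y = \lambda x$ and $k_1 x + D_s y = \lambda y$ yields $Av = \lambda x\,\bm{1}_{k_1,1} \conc \lambda y\,\bm{1}_{k_2,1} = \lambda v$, which is the claim.

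There is no genuine obstacle here: the verification is a short direct computation, and the conceptual content is only that the constant-on-blocks vectors form an invariant subspace on which $A$ restricts to $\overline{A}$. The single point demanding care is the bookkeeping of the two dimensions, namely that the top off-diagonal block contributes the factor $k_2$ while the bottom contributes $k_1$. This is precisely why the auxiliary matrix $\overline{A}$ carries the asymmetric off-diagonal entries $k_2$ and $k_1$ in that order rather than the reverse, and it is the detail most likely to be transcribed incorrectly.
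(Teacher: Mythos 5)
Your proof is correct and is essentially the paper's own argument: a direct block computation of $Av$, using that the all-ones vector is the $j=0$ eigenvector of each circulant block with eigenvalue the row sum, followed by substitution of the hypothesis $\overline{A}(x,y)^T=\lambda(x,y)^T$. Your care over the off-diagonal bookkeeping is well placed --- the paper's own proof even contains exactly the transcription slip you warned about, writing $D_s x + k_1 y = \lambda y$ where $k_1 x + D_s y = \lambda y$ is meant.
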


\begin{proof}
We have 
\[ Av = (\underbrace{C_s x+k_2y, \ldots, C_s x+k_2y}_{\text{$k_1$ terms}}, \underbrace{k_1x+D_s y, \ldots, k_1 x+D_s y}_{\text{$k_2$ terms}})^T .\] 

By assumption, 
\( C_s x+k_2y=\lambda x, \) 
and 
\( D_s x +k_1 y = \lambda y .\) 

Therefore, we see that 
\[ Av=\lambda (\underbrace{x, x, \ldots, x}_{\text{$k_1$ terms}},  \underbrace{y, y, \ldots, y}_{\text{$k_2$ terms}})^T= \lambda v .\] 
\end{proof}

\begin{prop} \label{prop:diag}
Keeping the notation of the previous proposition, suppose further that $\overline A$ is diagonalizable with eigenvectors $(x_1,y_1)$ and $(x_2, y_2)$. Let 
\[v_1= (\underbrace{x_1, x_1, \ldots, x_1}_{\text{$k_1$ terms}},  \underbrace{y_1, y_1, \ldots, y_1}_{\text{$k_2$ terms}})^T,\qquad v_2= (\underbrace{x_2, x_2, \ldots, x_2}_{\text{$k_1$ terms}},  \underbrace{y_2, y_2, \ldots, y_2}_{\text{$k_2$ terms}})^T.\]

Then the system $\{w_{j} \}_{j=1}^{k_1-1} \cup \{z_j \}_{j=1}^{k_2-1} \cup \{v_1, v_2 \}$ of eigenvectors of $A$ is linearly independent. In other words, $A$ is diagonalizable by these eigenvectors.
\end{prop}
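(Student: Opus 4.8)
The plan is to establish the claim by proving directly that the $n=k_1+k_2$ vectors in $\{w_j\}_{j=1}^{k_1-1}\cup\{z_j\}_{j=1}^{k_2-1}\cup\{v_1,v_2\}$ are linearly independent; since there are exactly $n$ of them in $\mathbb{C}^n$, linear independence is equivalent to their forming a basis, and hence to the diagonalizability of $A$. I would begin from a hypothetical vanishing linear combination
\[
\sum_{j=1}^{k_1-1} a_j w_j + \sum_{j=1}^{k_2-1} b_j z_j + c_1 v_1 + c_2 v_2 = 0
\]
and aim to force every coefficient to vanish.

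The key observation that drives the argument is that each nontrivial Fourier mode has vanishing coordinate sum: for $1\leq j\leq k-1$ one has $\sum_{i=0}^{k-1}\omega_k^{ij}=0$, which is exactly the cancellation $t_j=0$ already exploited in the proof of the first proposition. I would package this by introducing two linear functionals on $\mathbb{C}^n$, namely $\phi_1$ taking the sum of the first $k_1$ coordinates and $\phi_2$ taking the sum of the last $k_2$ coordinates. By the vanishing-sum observation together with the block structure of $w_j$ and $z_j$, both functionals annihilate every $w_j$ and every $z_j$, whereas $\phi_1(v_\ell)=k_1 x_\ell$ and $\phi_2(v_\ell)=k_2 y_\ell$ for $\ell=1,2$.

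Applying $\phi_1$ and $\phi_2$ to the vanishing combination therefore kills all the circulant-block terms at once and, after dividing by the nonzero factors $k_1$ and $k_2$, collapses the identity to the single $2$-dimensional relation
\[
c_1\begin{pmatrix}x_1\\ y_1\end{pmatrix} + c_2\begin{pmatrix}x_2\\ y_2\end{pmatrix} = \begin{pmatrix}0\\0\end{pmatrix}.
\]
Because $\overline{A}$ is assumed diagonalizable, its eigenvectors $(x_1,y_1)$ and $(x_2,y_2)$ are linearly independent, so this relation forces $c_1=c_2=0$. The original combination then reduces to $\sum_j a_j w_j + \sum_j b_j z_j = 0$, and the linear independence of $\{w_j\}\cup\{z_j\}$ already proved in Proposition~\ref{prop:block_circulant} forces the remaining coefficients to vanish as well.

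I do not anticipate a serious obstacle: the only substantive input is the vanishing coordinate sum of the nontrivial Fourier modes, which cleanly decouples the ``averaged'' eigenvectors $v_1,v_2$ from the two circulant blocks and confines the interaction to the $2\times2$ matrix $\overline{A}$. The one point I would state carefully is that diagonalizability of $\overline{A}$ is precisely what licenses taking $(x_1,y_1)$ and $(x_2,y_2)$ linearly independent; this is exactly where the hypothesis enters and is the source of the ``if and only if'' in the main theorem.
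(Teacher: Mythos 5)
Your proof is correct, and it takes a genuinely different route from the paper's. The paper's proof is a determinant computation: it arranges the $n=k_1+k_2$ vectors as columns of a matrix $E$ and evaluates $\det(E)$ by Laplace expansion along the block columns, obtaining $\det(E)=e_{k_1}e_{k_2}(x_1y_2-x_2y_1)$, where $e_{k_i}=\det(E_{k_i})$ are nonzero Vandermonde determinants and the factor $x_1y_2-x_2y_1$ is nonzero precisely because the two eigenvectors of $\overline A$ are independent. You instead exploit the block-sum functionals $\phi_1,\phi_2$, which annihilate every $w_j$ and $z_j$ because the nontrivial Fourier modes have vanishing coordinate sum (the same cancellation $t_j=0$ used earlier in the paper), collapsing any putative relation to $c_1(x_1,y_1)^T+c_2(x_2,y_2)^T=0$ and then falling back on the independence already established in Proposition~\ref{prop:block_circulant}. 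Each approach buys something: the paper's explicit determinant identity is what gets generalized into Lemma~\ref{lem:det(M) and det(X)} ($\det(M)=\det(E_{k_1})\cdots\det(E_{k_d})\det(X)$) for the $d$-block case, so it carries extra quantitative information; your argument avoids all determinant bookkeeping and scales verbatim to $d$ blocks, and since $\phi_i(v^\otimes)=k_iv_i$ on block-constant vectors, it even handles tensor expansions of a generalized eigenbasis of $\overline A$ (one gets $D\sum_\ell c_\ell u_\ell=0$ with $D=\mathrm{diag}(k_1,\dots,k_d)$ invertible), which is essentially what the paper needs in Section~3. One small precision point: applying $\phi_1$ and $\phi_2$ gives $k_1(c_1x_1+c_2x_2)=0$ and $k_2(c_1y_1+c_2y_2)=0$ respectively --- each component carries its own scalar factor rather than the pair being divided by both $k_1$ and $k_2$ at once --- but since both factors are nonzero positive integers your conclusion is unaffected.
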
 

\begin{proof}
For each $k$ let 
\[ E_k = \begin{pmatrix}1&1&1&\dots &1\\1& \omega_k&\omega_k^{2}&\dots &\omega_k^{k-1}\\1&\omega_k^2&\omega_k^{4}&\dots &\omega_k^{2(k-1)}\\\vdots &\vdots &\vdots &\ddots &\vdots \\1& \omega_k^{k-1}&\omega_k^{2(k-1)}&\dots &\omega_k^{(k-1)(k-1)}\end{pmatrix} \] 
be the matrix that is used to diagonalize a $k \times k$ circulant matrix, and $\widehat{E_k}$ be the submatrix of $E_k$ with the first column removed. Let $e_k=\det(E_k) \neq 0$.  The system $\{w_{j} \}_{j=1}^{k_1-1} \cup \{z_j \}_{j=1}^{k_2-1} \cup \{v_1, v_2 \}$ can be arranged to create the following matrix

\[
E=\left(\begin{array}{c|ccc|ccc|c}
\begin{array}{c}x_1\\\vdots\\x_1\end{array} && \widehat{E_{k_1}} &&& \begin{array}{c}x_2\\\vdots\\x_2\end{array} &&0 \\
\hline
\begin{array}{c}y_1\\\vdots\\y_1\end{array}  && 0 &&&\begin{array}{c}y_2\\\vdots\\y_2\end{array} &&\widehat{E_{k_2}} 
\end{array}\right)
\]
Using the Laplace expansion of the determinant (see \cite[Theorem 2.4.1]{[Prasolov]}),
we obtain the term $x_1y_2 e_{k_1}e_{k_2}$ as the product of the determinant of the left top corner block matrix of size $k_1 \times k_1$ with the determinant of the right down corner matrix of the size $k_2 \times k_2$. The only other non-zero summand in the Laplace expansion is the product
\[
\det \left(\left[\begin{array}{ccc|c}
\widehat{E_{k_1}} &&& \begin{array}{c}x_2\\\vdots\\x_2\end{array}  \end{array} \right] \right)\cdot
\det \left(\left[\begin{array}{c|ccc}
\begin{array}{c}y_1\\\vdots\\y_1\end{array} &&&
\widehat{E_{k_2}}   \end{array} \right] \right)=-x_2 y_1 e_{k_1} e_{k_2}. \]
Consequently,
\[ \det(E)=e_{k_1} e_{k_2} (x_1 y_2-x_2y_1)=e_{k_1} e_{k_2} \det \begin{pmatrix} x_1 & x_2 \\ y_1 & y_2 \end{pmatrix} \neq 0.\] 

\end{proof}

In addition, there is a relationship between the eigenvalues of $A$ and $\overline A$, to prove which we need a preliminary lemma.

\begin{lem} \label{lem:trace}
Let $M=Circ(m_0, \ldots, m_{k-1})$ be a circulant matrix. Let $M_s=\sum_{i=0}^{k-1} m_i$.  Let $\{\lambda_{j}^M \}_{j=0}^{k-1} $ be the set of eigenvalues of $M$ described in the Circulant Diagonalization Theorem \ref{prop: CDT}. Then 
\begin{enumerate}
\item $\sum_{j=1}^{k_1-1} \lambda_j^M=\Tr(M)-M_s.$
\item $\sum_{j=1}^{k_1-1} (\lambda_{j}^M)^2=\Tr(M^2)-M_s^2.$
\end{enumerate}
\end{lem}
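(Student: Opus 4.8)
The plan is to reduce everything to the standard identity that, for any matrix, the trace of its $n$-th power equals the $n$-th power sum of its eigenvalues (counted with algebraic multiplicity). The two ingredients that make this work here are already available: first, by the Circulant Diagonalization Theorem \ref{prop: CDT} and the Vandermonde independence recorded in Remark \ref{rem:Fourier modes}, the matrix $M$ is diagonalizable and the multiset $\{\lambda_j^M\}_{j=0}^{k-1}$ is its \emph{full} spectrum with multiplicity; second, the eigenvalue indexed by $j=0$ is exactly the row sum. Indeed, substituting $j=0$ into the eigenvalue formula of Theorem \ref{prop: CDT} collapses every power of $\omega_k$ to $1$, giving $\lambda_0^M = m_0 + m_{k-1} + \dots + m_1 = M_s$. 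Thus $M_s$ is precisely the eigenvalue that has been deleted from the sums in both assertions, which is what links the ``reduced'' power sums to the traces.

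For part (1), I would write $M = P D P^{-1}$ with $D = \mathrm{diag}(\lambda_0^M,\dots,\lambda_{k-1}^M)$, so that $\Tr(M) = \sum_{j=0}^{k-1}\lambda_j^M = M_s + \sum_{j=1}^{k-1}\lambda_j^M$; isolating the $j=0$ term and rearranging gives $\sum_{j=1}^{k-1}\lambda_j^M = \Tr(M) - M_s$. For part (2), the same diagonalization yields $M^2 = P D^2 P^{-1}$, hence $\Tr(M^2) = \sum_{j=0}^{k-1}(\lambda_j^M)^2 = M_s^2 + \sum_{j=1}^{k-1}(\lambda_j^M)^2$, and isolating the $j=0$ term as before produces $\sum_{j=1}^{k-1}(\lambda_j^M)^2 = \Tr(M^2) - M_s^2$. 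The squared eigenvalues $(\lambda_j^M)^2$ are of course the eigenvalues of $M^2$, so no separate spectral computation for $M^2$ is needed.

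I do not expect a genuine obstacle in this lemma; it is essentially bookkeeping once the trace--power-sum identity and the identification $\lambda_0^M = M_s$ are in place, and the diagonalizability of circulant matrices means I never have to worry about Jordan blocks or algebraic-versus-geometric multiplicity. The only point that deserves a careful line is ensuring that the eigenvalue removed from the two sums is the \emph{same} one, namely the row sum $M_s = \lambda_0^M$, in both statements. I would also note, for the record, that the upper limits of the sums in the statement should read $k-1$ rather than $k_1-1$, since $M$ is a $k\times k$ matrix and the notation $k_1$ does not occur in this lemma.
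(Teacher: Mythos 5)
Your proof is correct and takes essentially the same approach as the paper's, which likewise rests on the two facts that $\lambda_0^M = M_s$ and that $\Tr(M^n) = \sum_{j=0}^{k-1} \left(\lambda_j^M\right)^n$; you simply make the latter explicit via diagonalization, which the paper invokes without comment. Your side remark that the summation limits $k_1-1$ in the statement are a typo for $k-1$ is also correct.
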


\begin{proof}
Both equalities are direct consequences of the facts that, when $j=0$, $\lambda_{j}^M=M_s$, and that for all $k \geq 0$ 
\[ \Tr(M^k)=\sum_{j=0}^{k-1} (\lambda_{j}^M)^k .\] 
\end{proof}

\begin{prop} \label{prop:converse}
Keeping the notation of the previous proposition, let $\lambda_1, \lambda_2$ be the two remaining eigenvalues of $A$, that is, the eigenvalues not coming from the circulant blocks $C$ and $D$. Then $\lambda_1$ and $\lambda_2$ are eigenvalues of $\overline A$.
\end{prop}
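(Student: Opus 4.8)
The plan is to prove the proposition by comparing the first two power sums of the eigenvalues of $A$ with those of $\overline{A}$, using Lemma \ref{lem:trace} to peel off the contributions of the circulant blocks. Since $\overline{A}$ is only $2\times 2$, its spectrum is completely determined by $\Tr(\overline{A})$ and $\Tr(\overline{A}^2)$ (equivalently, by the trace and the determinant). So it suffices to show that the two remaining eigenvalues $\lambda_1,\lambda_2$ of $A$ satisfy $\lambda_1+\lambda_2=\Tr(\overline{A})$ and $\lambda_1^2+\lambda_2^2=\Tr(\overline{A}^2)$; two complex numbers are determined as a multiset by their sum and their sum of squares (since these fix both elementary symmetric functions), and hence $\{\lambda_1,\lambda_2\}$ must coincide with the spectrum of $\overline{A}$.

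First I would record the relevant traces of $A$. Because $C$ and $D$ sit on the diagonal, $\Tr(A)=\Tr(C)+\Tr(D)$. For the second power, I would compute the diagonal blocks of $A^2$: they are $C^2+\bm{1}_{k_1,k_2}\bm{1}_{k_2,k_1}$ and $D^2+\bm{1}_{k_2,k_1}\bm{1}_{k_1,k_2}$. The product $\bm{1}_{k_1,k_2}\bm{1}_{k_2,k_1}$ is the $k_1\times k_1$ matrix with every entry equal to $k_2$, whose trace is $k_1k_2$, and symmetrically the other product contributes $k_1k_2$. Hence $\Tr(A^2)=\Tr(C^2)+\Tr(D^2)+2k_1k_2$. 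On the other side, a direct computation with $\overline{A}=\left(\begin{smallmatrix}C_s & k_2\\ k_1 & D_s\end{smallmatrix}\right)$ gives $\Tr(\overline{A})=C_s+D_s$ and $\Tr(\overline{A}^2)=C_s^2+D_s^2+2k_1k_2$.

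Next I would strip off the block eigenvalues. By Proposition \ref{prop:block_circulant} the spectrum of $A$ (with multiplicity) contains the $k_1+k_2-2$ values $\{\lambda_j^C\}_{j=1}^{k_1-1}\cup\{\lambda_j^D\}_{j=1}^{k_2-1}$, occupying exactly that many of the $k_1+k_2$ eigenvalue slots, so that $\lambda_1,\lambda_2$ are the two remaining ones. Summing over the whole spectrum and using Lemma \ref{lem:trace}, part (1), I get
\[
\lambda_1+\lambda_2=\Tr(A)-\Big(\sum_{j=1}^{k_1-1}\lambda_j^C+\sum_{j=1}^{k_2-1}\lambda_j^D\Big)=\Tr(C)+\Tr(D)-\big(\Tr(C)-C_s\big)-\big(\Tr(D)-D_s\big)=C_s+D_s=\Tr(\overline{A}).
\]
Likewise, summing squares over the spectrum and invoking Lemma \ref{lem:trace}, part (2), together with the value of $\Tr(A^2)$ above, yields $\lambda_1^2+\lambda_2^2=C_s^2+D_s^2+2k_1k_2=\Tr(\overline{A}^2)$. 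Matching these two identities with the spectral data of $\overline{A}$ gives the claim.

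The main point requiring care is the multiset bookkeeping in the previous step: I must be sure that the $k_1+k_2-2$ block eigenvalues genuinely account for that many entries of the spectrum of $A$ before declaring $\lambda_1,\lambda_2$ to be ``the rest.'' This is exactly what the linear independence assertion in Proposition \ref{prop:block_circulant} supplies, since $k_1+k_2-2$ linearly independent eigenvectors force the corresponding eigenvalues to appear with total algebraic multiplicity at least $k_1+k_2-2$. The only other place to be cautious is the computation of $\Tr(A^2)$, where the off-diagonal all-ones blocks contribute the terms $2k_1k_2$; once that is done correctly, the conclusion is immediate, and indeed the argument is robust enough that it does not require $\overline{A}$ (or $A$) to be diagonalizable.
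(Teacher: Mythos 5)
Your proof is correct and follows essentially the same route as the paper: compute $\Tr(A)$ and $\Tr(A^2)$, strip off the circulant block eigenvalues via Lemma \ref{lem:trace}, and match the residual spectral data to $\overline{A}$. The only cosmetic difference is that you compare the power sums $\lambda_1+\lambda_2$ and $\lambda_1^2+\lambda_2^2$ directly with $\Tr(\overline{A})$ and $\Tr(\overline{A}^2)$, whereas the paper converts the second identity into the product $\lambda_1\lambda_2=C_sD_s-k_1k_2$ via Newton's formula --- an equivalent bookkeeping choice; your explicit remark on why the $k_1+k_2-2$ block eigenvalues occupy that many slots of the spectrum is a point the paper leaves implicit.
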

\begin{proof}
It is enough to show that 
\[ \lambda_1+\lambda_2=C_s +D_s, \qquad\text{and}\qquad \lambda_1 \lambda_2=C_sD_s-k_1k_2 .\] 
First, by Proposition \ref{prop:block_circulant} we have 
\[ \lambda_1+\lambda_2+ \sum_{j=1}^{k_1-1} \lambda_{j}^C+\sum_{j=1}^{k_2-1} \lambda_{j}^D = \Tr(A)=\Tr(C)+\Tr(D) .\]
By Lemma \ref{lem:trace}, we have 
\[ \sum_{j=1}^{k_1-1} \lambda_{j}^C=\Tr(C)-C_s ,\qquad\text{and}\qquad \sum_{j=1}^{k_2-1} \lambda_{j}^D=\Tr(D)-D_s .\] 
Combining these equalities, we conclude that 
\[ \lambda_1+\lambda_2= C_s+D_s .\] 

To prove the equality $\lambda_1 \lambda_2=C_sD_s-k_1k_2$, we first compute $\lambda_1^2+\lambda_2^2$, using $A^2$. We have 

\[ A^2=\begin{pmatrix}
   C^2+k_2 \bm{1}_{k_1} & * \\
   * &  D ^2+k_1 \bm{1}_{k_2}
\end{pmatrix},\] 
where $\bm{1}_{k}$ denotes a $k\times k$ matrix with all entries equal to $1$.
This implies that 
\[ \Tr(A^2)=\Tr(C^2)+\Tr(D^2)+2k_1k_2. \] 
Additionally, we have 
\[ \Tr(A^2)=\lambda_1^2+\lambda_2^2 +\sum_{j=1}^{k_1-1} (\lambda_{j}^C)^2+\sum_{j=1}^{k_2-1} (\lambda_j^D)^2 ,\] 
\[ \Tr(C^2)= \sum_{j=1}^{k_1-1} (\lambda_{j}^C)^2+C_s^2, \] 

\[ \Tr(D^2)= \sum_{j=1}^{k_2-1} (\lambda_{j}^D)^2+D_s^2. \] 
Combining these equalities, we get 
\[ \lambda_1^2+\lambda_2^2=C_s^2+D_s^2+2k_1k_2 .\] 
Therefore, by Newton's formula we have 
\begin{align*}
\lambda_1 \lambda_2 &=\frac{1}{2} \left[(\lambda_1+\lambda_2)^2-\lambda_1^2-\lambda_2^2 \right] \\ &=\frac{1}{2} \left[(C_s+D_s)^2-(C_s^2+D_s^2+2k_1k_2)\right]\\ &=C_sD_s-k_1k_2.
\end{align*}
This completes the proof.
\end{proof}

We discuss a significant case in which $\overline A$ is diagonalizable. 
\begin{prop}\label{prop:explicit eigenvalues}
Keeping the notation of the previous proposition, suppose that $C_s$ and $D_s$ are real numbers (or complex numbers with the same real part, or with the same imaginary part). Then $\overline A$ is diagonalizable. Consequently, $A$ is diagonalizable by the system of eigenvectors discussed in proposition \ref{prop:diag}.
\end{prop}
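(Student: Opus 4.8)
The plan is to reduce the whole statement to a single non-vanishing condition via the elementary criterion that a $2\times 2$ complex matrix is diagonalizable precisely when it has two distinct eigenvalues or is a scalar multiple of the identity. First I would dispose of the scalar case: the off-diagonal entries of $\overline A$ are $k_1$ and $k_2$, which are the sizes of the circulant blocks and hence strictly positive integers, so $\overline A$ can never be scalar. Therefore $\overline A$ is diagonalizable if and only if its two eigenvalues are distinct, i.e. if and only if the discriminant of its characteristic polynomial is nonzero.

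Next I would compute that characteristic polynomial explicitly. It equals
\[
\chi(\lambda) = \lambda^2 - (C_s + D_s)\lambda + (C_s D_s - k_1 k_2),
\]
so that its discriminant is
\[
\Delta = (C_s + D_s)^2 - 4(C_s D_s - k_1 k_2) = (C_s - D_s)^2 + 4 k_1 k_2 .
\]
The entire claim thus reduces to verifying $\Delta \neq 0$ under each of the stated hypotheses, after which distinctness of the eigenvalues yields diagonalizability of $\overline A$, and the final clause follows at once by invoking Proposition \ref{prop:diag} to lift a diagonalizing basis of $\overline A$ to one of $A$.

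The two straightforward cases are exactly those in which $C_s - D_s$ is a real number, namely when $C_s$ and $D_s$ are both real or, more generally, share the same imaginary part. Then $(C_s - D_s)^2 \geq 0$, so $\Delta \geq 4 k_1 k_2 > 0$ and the eigenvalues are genuinely distinct. The hard part will be the remaining case, where $C_s$ and $D_s$ share the same real part: here $C_s - D_s$ is purely imaginary, so $(C_s - D_s)^2 \leq 0$ and $\Delta = 4 k_1 k_2 - |C_s - D_s|^2$ is a real quantity that can only be pinned away from zero by ruling out the resonance $|C_s - D_s|^2 = 4 k_1 k_2$. This is the step I expect to demand the most care, since without an additional nondegeneracy input the cancellation is not automatic; the clean portion of the argument is genuinely the real-difference regime, and I would either supply such a condition or confine the conclusion to the cases where $C_s - D_s$ is real.
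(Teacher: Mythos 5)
Your approach is the same as the paper's: you compute the same characteristic polynomial and the same discriminant $\Delta(\overline A)=(C_s-D_s)^2+4k_1k_2$, dispatch the real and equal-imaginary-part cases by $\Delta\geq 4k_1k_2>0$, and lift a diagonalizing basis of $\overline A$ to one of $A$ via Proposition \ref{prop:diag}, exactly as the paper does. The one point where you diverge is your refusal to close the equal-real-part case, and that hesitation is not a weakness of your write-up: you have in fact caught an error in the paper. The paper's proof asserts that $(C_s-D_s)^2\in\R$ forces $\Delta(\overline A)>0$, which is a non sequitur when $C_s-D_s$ is purely imaginary: writing $C_s-D_s=\i t$ with $t\in\R$, one gets $\Delta=4k_1k_2-t^2$, which vanishes precisely at the resonance $t^2=4k_1k_2$ that you isolated.

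At that resonance the proposition is genuinely false, not merely unproved. Take $k_1=k_2=1$, $C=(2\i)$, $D=(0)$, so that $C_s=2\i$ and $D_s=0$ share real part $0$. Then
\[
\overline A = A = \begin{pmatrix} 2\i & 1 \\ 1 & 0 \end{pmatrix},
\qquad
p_{\overline A}(X)=X^2-2\i X-1=(X-\i)^2,
\]
and $\overline A-\i I$ has rank $1$, so the eigenspace of the repeated eigenvalue $\i$ is one-dimensional and $\overline A$ is not diagonalizable; as you noted, $\overline A$ can never be scalar because its off-diagonal entries $k_1,k_2$ are positive, so the repeated-eigenvalue case is fatal. (A larger instance: $k_1=k_2=2$, $C=Circ(2\i,2\i)$, $D=Circ(0,0)$ gives $\Delta=(4\i)^2+16=0$.) Note that when $t^2>4k_1k_2$ the discriminant is a nonzero negative real, so the eigenvalues are still distinct and diagonalizability survives; the failure occurs exactly and only at $|C_s-D_s|=2\sqrt{k_1k_2}$. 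Your proposed repair is therefore the right one: either restrict the statement to the cases where $C_s-D_s\in\R$ (both sums real, or equal imaginary parts), or retain the equal-real-part case with the added nondegeneracy hypothesis $|C_s-D_s|\neq 2\sqrt{k_1k_2}$. No cleverer argument can rescue the statement as written, since the counterexample above satisfies its hypotheses and violates its conclusion.
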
 

\begin{proof}

The characteristic polynomial of $\overline A$ is 
\[ X^2-(C_s+D_s)X+(C_sD_s-k_1k_2) .\] 

The discriminant of this polynomial is 
\[ \Delta(\overline A)=(C_s+D_s)^2-4(C_sD_s-k_1k_2)=(C_s-D_s)^2+4k_1 k_2 .\] 
Since $C_s-D_s$ is either real or purely imaginary, $(C_s- D_s)^2 \in \R$, hence $\Delta(\overline A)>0.$ Therefore, $\overline A$ has two distinct eigenvalues and hence is diagonalizable. For the sake of completion, the two eigenvalues are
\[
\lambda_i = \frac{C_s+D_s\pm\sqrt{(C_s-D_s)^2+4k_1k_2}}{2}.
\]
\end{proof}

\section{The general case}\label{sec:general case}

In the previous section we considered joins of $2$ circulant matrices of a special important shape. In this section, we extend our results to general finite joins of circulant matrices. In our main theorem, we completely characterize the spectrum of these matrices. First, let us introduce some notations and conventions.

Let $d, k_1, k_2, \ldots, k_d \in \N\setminus\{0\}$. Set also 
\( n=k_1+k_2+\ldots+k_d .\) 
Thus $n$ is a partition of $n$ into $d$ non-zero summands. We shall consider $n \times n$ matrices of the following form
\[
A=\left(\begin{array}{c|c|c|c}
C_1 & a_{1,2}\ind & \cdots & a_{1,d}\ind \\
\hline
a_{2,1}\ind & C_2 & \cdots & a_{2,d}\ind \\
\hline
\vdots & \vdots & \ddots & \vdots \\
\hline
a_{d,1}\ind & a_{d,2}\ind & \cdots & C_d
\end{array}\right),
\]
where for each $1 \leq i,j \leq d$ $C_i=Circ(c_{i,0},\dots,c_{i,k_{i-1}})$ is a circulant matrix of size $k_i \times k_i$, and $a_{i,j}\ind$ is a $k_i \times k_j$ matrix with all entries equal to a constant $a_{i,j}$.

We have a direct generalization of Proposition \ref{prop:block_circulant}:

\begin{prop}\label{prop:circulant eigenvectors}
For each $1 \leq i \leq d$ and  $1 \leq j \leq k_i-1$ let 
\[ \begin{aligned}
w_{i,j}&=\vec{0}_{k_1} \conc \ldots \conc\vec{0}_{k_{i-1}}\conc v_{k_i,j} \conc \vec{0}_{k_{i+1}} \conc \ldots \conc \vec{0}_{k_d}\\
&= \vec{0}_{k_1} \conc \ldots \conc \vec{0}_{k_{i-1}} \conc  \underbrace{(1, \omega_{k_i}^j, \omega_{k_i}^{2j}, \ldots, \omega_{k_i}^{(k_i-1)j})^T}_{\text{$i$-th block}} \conc\, \vec{0}_{k_{i+1}} \conc \ldots \conc \vec{0}_{k_d}.
\end{aligned}
\] 
Then $w_{i,j}$ is an eigenvector of $A$ associated with the eigenvalue 
\[ \lambda _{j}^{C_i}=c_{i,0}+c_{i, k_i-1}\omega _{k_i}^{j}+c_{i, k_i-2}\omega_{k_i} ^{2j}+\dots +c_{i,1}\omega_{k_i} ^{(k_i-1)j} \]

Furthermore, the system of $\sum_{i=1}^d k_i -d$ eigenvectors $\{w_{i,j} \}$ is linearly independent. 
\end{prop}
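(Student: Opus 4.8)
The plan is to mirror the two steps used to prove Proposition~\ref{prop:block_circulant} in the two-block case, now carried out simultaneously across all $d$ blocks. The eigenvector claim is a direct block computation in which the off-diagonal contributions vanish because of a root-of-unity sum, and the linear independence follows from the disjoint-support structure of the $w_{i,j}$ together with the Vandermonde independence recorded in Remark~\ref{rem:Fourier modes}.

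First I would verify that each $w_{i,j}$ is an eigenvector by writing $A w_{i,j}$ block by block. The only nonzero block of $w_{i,j}$ is the $i$-th, equal to $v_{k_i,j}$. In the $i$-th output block, the diagonal block $C_i$ contributes $C_i v_{k_i,j} = \lambda_j^{C_i} v_{k_i,j}$ by the Circulant Diagonalization Theorem~\ref{prop: CDT}, while every off-diagonal block in that block-row multiplies a zero vector and contributes nothing. For an output block $l \neq i$, the contribution comes solely from the constant block $a_{l,i}\ind$ applied to $v_{k_i,j}$; since $a_{l,i}\ind$ has all entries equal to $a_{l,i}$, each entry of this product equals $a_{l,i}\, t_j$ with $t_j = \sum_{m=0}^{k_i-1}\omega_{k_i}^{mj}$. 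The key point, exactly as in the two-block case, is that $t_j = 0$ for $1 \leq j \leq k_i-1$, since this is a sum of $k_i$-th roots of unity raised to a power that is not a multiple of $k_i$. Hence every off-diagonal output block vanishes and $A w_{i,j} = \lambda_j^{C_i} w_{i,j}$.

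Next I would establish linear independence. Suppose $\sum_{i=1}^d \sum_{j=1}^{k_i-1} \alpha_{i,j} w_{i,j} = \vec{0}$. Because $w_{i,j}$ is supported entirely in the $i$-th coordinate block, restricting this relation to the $i$-th block annihilates all terms with index $\neq i$ and leaves $\sum_{j=1}^{k_i-1} \alpha_{i,j} v_{k_i,j} = \vec{0}$. By Remark~\ref{rem:Fourier modes} the vectors $v_{k_i,1},\dots,v_{k_i,k_i-1}$ are linearly independent, as they form columns of a Vandermonde matrix, so $\alpha_{i,j}=0$ for all $j$. Running this argument over each $i=1,\dots,d$ forces all coefficients to vanish, which gives independence of the full system of $\sum_{i=1}^d k_i - d$ vectors.

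Neither step presents a genuine obstacle; the proof is essentially bookkeeping over the block structure. If anything requires care, it is simply confirming that the off-diagonal blocks contribute nothing, that is, the vanishing of the root-of-unity sum $t_j$, and observing that the disjoint supports reduce global independence to the already-known independence of the Fourier modes within each individual block.
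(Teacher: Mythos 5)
Your proof is correct and follows exactly the route the paper intends: Proposition~\ref{prop:circulant eigenvectors} is stated there as a ``direct generalization'' of Proposition~\ref{prop:block_circulant}, whose proof is precisely your block computation with the vanishing root-of-unity sum $t_j=\sum_{m=0}^{k_i-1}\omega_{k_i}^{mj}=0$, and the linear independence via disjoint block supports together with the Vandermonde independence of Remark~\ref{rem:Fourier modes}. Nothing is missing; your write-up in fact supplies the details the paper leaves implicit.
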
 

We introduce the following terminology.
\begin{definition}
Keeping the previous notation, we will refer to the $w_{i,j}$'s and to the associated eigenvalues as the \emph{circulant eigenvectors and eigenvalues} of $A$.

Let $\lambda_1, \lambda_2, \ldots \lambda_d$ be the (not necessarily distinct) remaining eigenvalues of $A$.
The \textit{reduced characteristic polynomial} of $A$ is 
\[ \overline{p}_{A}(X)=\prod_{i=1}^d (X-\lambda_i)=\dfrac{p_{A}(X)}{\prod_{\substack{1 \leq i \leq d, \\ 1 \leq  j \leq k_i-1}} (X-\lambda_{j}^{C_i})} .\] 
\end{definition} 

Motivated by the findings of Section \ref{sec:2 circulant}, we look for the missing eigenvectors of $A$ in a special form, namely 
\begin{equation}\label{eq:eigenvector} v= (x_1)_{k_1} \conc \ldots \conc(x_i)_{k_i} \conc \ldots \conc (x_d)_{k_d}, \end{equation} 
where 
\[ (x_i)_{k_i}= \underbrace{(x_i, \ldots, x_i)^{T}}_{\text{$k_i$ terms}}.\] 

For $1 \leq i \leq d$, we denote the row sum of the matrix $C_i$ by
\[ C_{is}=\sum_{j=0}^{k_i-1} c_{i,j}.\]
A direct calculation shows that 
\[ A v= (C_{1s} x_1+a_{12}k_2 x_2+ \ldots+ a_{1d} k_d x_d)_{k_1} \conc \ldots \conc (a_{d1}k_1x_1+a_{d2}k_2x_2+\ldots+C_{ds} x_d)_{k_d} .\] 

Therefore, the equation $A v=\lambda v$ can be equivalently written as 
\[ \overline A (x_1, x_2, \ldots, x_d)^{T}= \lambda (x_1, \ldots, x_d)^{T} .\] 

where $\overline A$ is the $d\times d$ matrix
\[ \overline A= 
\begin{pmatrix}
C_{1s} & a_{12}k_2 & \cdots & a_{1n}k_d \\
a_{21}k_1 & C_{2s} & \cdots & a_{2n}k_d \\
\vdots  & \vdots  & \ddots & \vdots  \\
a_{d1}k_1 & a_{d2}k_2 & \cdots & C_{ds} 
\end{pmatrix} .\] 

In other words, an eigenvector of $A$ of the form \eqref{eq:eigenvector} can be ``condensed'' to an eigenvector $(x_1, \ldots, x_d)^{T}$ of $\overline A$ with respect to the same eigenvalue. A strong converse statement also holds: to prove it, we need a preliminary lemma. 
\begin{lem}\label{lem:det(M) and det(X)}
Let $X=(x_{ij})$ be a $d \times d$ matrix.  Let $M$ be the $(k_1+\ldots+k_d) \times (k_1+\ldots+k_d)$ matrix formed by the following column vectors (in this order) 

\[\begin{array}{c} (x_{11})_{k_1} \conc \ldots \conc (x_{d1})_{k_d},\; w_{j, 1}\; (1 \leq j \leq k_1-1) , \\ 
(x_{12})_{k_1} \conc \ldots \conc (x_{d2})_{k_d},\; w_{j, 2}\; (1 \leq j \leq k_2-1 ),\\ 
(x_{1n})_{k_1} \conc \ldots \conc (x_{dd})_{k_d},\; w_{j, d}\;( 1 \leq j \leq k_d-1).
\end{array}
\] 

Then 
\[ \det(M)=\det(E_{k_1}) \ldots \det(E_{k_d}) \det(X),\]
where $E_k$ is the nonsingular matrix
\[ E_k = \begin{pmatrix}1&1&1&\dots &1\\1& \omega_k&\omega_k^{2}&\dots &\omega_k^{k-1}\\1&\omega_k^2&\omega_k^{4}&\dots &\omega_k^{2(k-1)}\\\vdots &\vdots &\vdots &\ddots &\vdots \\1& \omega_k^{k-1}&\omega_k^{2(k-1)}&\dots &\omega_k^{(k-1)(k-1)}\end{pmatrix} \] 
In particular, $M$ is non-singular iff $X$ is non-singular.
\end{lem}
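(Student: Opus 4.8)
### Proof proposal

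The plan is to compute $\det(M)$ by carefully tracking how $M$ is built out of the columns $(x_{1i})_{k_1}\conc\ldots\conc(x_{di})_{k_d}$ together with the circulant eigenvectors $w_{j,i}$. The key structural observation is that $M$ has a natural $d\times d$ block decomposition: if we group the rows into $d$ consecutive blocks of sizes $k_1,\dots,k_d$ (corresponding to the $d$ circulant blocks of $A$), and group the columns into $d$ consecutive blocks as they are listed in the lemma, then the $(a,b)$-th block of $M$ is a $k_a\times k_b$ matrix. The first column of each column-block $b$ is the constant vector $(x_{ab})_{k_a}$ in row-block $a$, while the remaining $k_b-1$ columns are the circulant vectors $w_{j,b}$, which are supported only on row-block $b$ (they are zero outside it). So the off-diagonal blocks ($a\neq b$) are zero except possibly in their very first column, where they equal $(x_{ab})_{k_a}$; the diagonal blocks ($a=b$) are exactly $E_{k_a}$ with its first column replaced by $(x_{aa})_{k_a}$.

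First I would set up the multilinearity of the determinant in the $d$ ``special'' columns — the columns $(x_{1b})_{k_1}\conc\ldots\conc(x_{db})_{k_d}$, one per column-block. The idea is to expand $\det(M)$ as a sum over which row-block each such special column ``contributes'' a nonzero entry from. Concretely, each special column for block $b$ is a sum $\sum_{a=1}^d \iota_a\big((x_{ab})_{k_a}\big)$ of vectors supported in distinct row-blocks $a$, and the remaining $w_{j,b}$ columns are all supported in row-block $b$. I would argue that in the Laplace/block expansion, a term survives only if the $d$ special columns are assigned to $d$ \emph{distinct} row-blocks — otherwise two of the $k_b-1$ circulant-column groups would be forced into the same row-block together with a duplicated support, collapsing the rank and killing the minor. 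This reduces the sum precisely to a sum over permutations $\sigma\in S_d$, where the special column of block $b$ is taken in row-block $\sigma(b)$.

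Next I would evaluate the surviving term for a fixed permutation $\sigma$. After routing the special columns according to $\sigma$, the matrix block-decomposes so that row-block $a$ receives the circulant columns $w_{j,a}$ (contributing the $k_a-1$ non-first columns of $E_{k_a}$) together with one special column $(x_{a,\sigma^{-1}(a)})_{k_a}$ slotted into the first-column position. The determinant of this $k_a\times k_a$ piece is then $\det(E_{k_a})$ with its first column replaced by the constant vector $(x_{a,\sigma^{-1}(a)})_{k_a}$; by cofactor expansion along that first column and the Vandermonde structure of $E_{k_a}$, this equals $x_{a,\sigma^{-1}(a)}\cdot\det(E_{k_a})$. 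Collecting the sign $\operatorname{sgn}(\sigma)$ produced by permuting the special columns into their block positions, the full term is
\[
\operatorname{sgn}(\sigma)\,\prod_{a=1}^d x_{a,\sigma^{-1}(a)}\,\det(E_{k_a}).
\]
Summing over $\sigma\in S_d$ and pulling out the common factor $\prod_a\det(E_{k_a})$ yields
\[
\det(M)=\Big(\prod_{a=1}^d \det(E_{k_a})\Big)\sum_{\sigma\in S_d}\operatorname{sgn}(\sigma)\prod_{a=1}^d x_{a,\sigma^{-1}(a)}=\det(E_{k_1})\cdots\det(E_{k_d})\,\det(X),
\]
which is the claimed formula; the final ``in particular'' follows since each $\det(E_{k_a})\neq 0$ (Vandermonde, Remark~\ref{rem:Fourier modes}).

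The main obstacle I anticipate is the bookkeeping in the first step: making rigorous the claim that only the $d!$ ``permutation'' terms survive, and correctly tracking the sign $\operatorname{sgn}(\sigma)$ that arises from the column reorderings. The cleanest route is probably to avoid a raw Leibniz expansion and instead perform column operations that move each special column physically into its destination row-block's first-column slot, reducing $M$ to block-diagonal-plus-permutation form and then invoking the generalized Laplace (block-minor) expansion of \cite[Theorem~2.4.1]{[Prasolov]}, exactly as was done in the $d=2$ case of Proposition~\ref{prop:diag}. This keeps the combinatorics organized and lets the $d=2$ computation generalize transparently.
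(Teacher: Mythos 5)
Your proof is correct, but it is organized differently from the paper's. The paper disposes of this lemma in one line: ``by induction and the Laplace expansion formula, analogously to the proof of Proposition \ref{prop:diag}'' --- i.e.\ an induction on $d$ in which each step is a small block-Laplace computation of the same type as the $d=2$ case. You instead give a direct, non-inductive argument: expand $\det(M)$ by multilinearity in the $d$ special columns, each of which splits as a sum of $d$ pieces supported in distinct row-blocks; observe that every column of each resulting matrix is supported in a single row-block, so if some row-block $a$ received two special columns it would host $k_a+1$ columns inside a $k_a$-dimensional coordinate subspace and the term would vanish; hence only the $d!$ permutation assignments survive, and the Leibniz formula for $\det(X)$ emerges with $\prod_a\det(E_{k_a})$ factored out. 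This is a legitimately different decomposition of the computation, and it is in fact \emph{more} complete than the paper's sketch, since it makes explicit where $\det(X)$ comes from; the paper's induction, in exchange, avoids your global sign bookkeeping by only ever doing one $2$-block Laplace expansion at a time. Two spots in your write-up deserve tightening. First, your stated reason for the vanishing of non-bijective terms (``two of the circulant-column groups forced into the same row-block'') is garbled: the circulant columns never move, and the correct statement is the dimension count above on special columns. Second, the claim that the sign is exactly $\operatorname{sgn}(\sigma)$ is true but should be justified by noting that the circulant columns $w_{j,b}$ occupy the same positions before and after the rearrangement, so the column permutation acts only on the $d$ first-slot positions $q_1<\dots<q_d$, permuting them exactly as $\sigma$ permutes $\{1,\dots,d\}$; this makes $\operatorname{sgn}(\tau)=\operatorname{sgn}(\sigma)$ immediate (and is confirmed by the $-x_2y_1e_{k_1}e_{k_2}$ term in Proposition \ref{prop:diag}). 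With those two clarifications, your argument stands on its own without the fallback to the inductive route you mention at the end.
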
 
\begin{proof}
By induction and the Laplace expansion formula, analogously to the proof of Proposition \ref{prop:diag}.
\end{proof}

\begin{definition}
For $v = (x_1,\dots,x_d)^T\in \C^d$, $k_1,\dots,k_d\in\N\setminus\{0\}$ and $n=k_1+\dots+k_d$, we refer to the vector 
\[
v^\otimes = (\underbrace{x_1, \ldots, x_1}_{\text{$k_1$ terms}},\dots,\underbrace{x_d, \ldots, x_d}_{\text{$k_d$ terms}})^{T}\in \C^n
\]
as the \emph{tensor expansion} of $v$.
\end{definition}
\begin{prop}
The tensor expansions of the generalized eigenspaces of $\overline A$ are generalized eigenspaces of $A$. More precisely, if $(\overline A-\lambda I)^mv=0$ for some $v\in\C^d$, $\lambda\in Spec(\overline A)$ and $m\in\N$, then $(A-\lambda I)^mv^\otimes=0$.
\end{prop}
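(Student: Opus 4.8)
The plan is to reduce everything to a single intertwining identity between $A$ and $\overline{A}$ under the tensor expansion map. Writing $\phi\colon\mathbb{C}^d\to\mathbb{C}^n$ for the linear map $\phi(v)=v^\otimes$, I would first observe that the direct calculation carried out just before the statement — which showed that for $v=(x_1,\dots,x_d)^T$ one has $Av^\otimes=(\overline{A}v)^\otimes$ — is exactly the assertion that $\phi$ intertwines the two operators, that is, $A\circ\phi=\phi\circ\overline{A}$. The point worth emphasizing is that this holds for \emph{every} $v\in\mathbb{C}^d$, not merely for eigenvectors: the computation of $Av^\otimes$ made no use of $v$ being an eigenvector, so no extra work is needed here.

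Granting the intertwining identity, the rest is a short formal argument. Since $\phi$ is linear it commutes with scalar multiplication, so $\phi(\lambda v)=\lambda\phi(v)$, and hence
\[
(A-\lambda I)\phi(v)=A\phi(v)-\lambda\phi(v)=\phi(\overline{A}v)-\phi(\lambda v)=\phi\bigl((\overline{A}-\lambda I)v\bigr).
\]
Thus $\phi$ also intertwines $A-\lambda I$ with $\overline{A}-\lambda I$. I would then induct on $m$: assuming $(A-\lambda I)^{m-1}\phi(v)=\phi\bigl((\overline{A}-\lambda I)^{m-1}v\bigr)$ for all $v$, applying $A-\lambda I$ once more and invoking the case $m=1$ with the vector $(\overline{A}-\lambda I)^{m-1}v$ in place of $v$ yields $(A-\lambda I)^{m}\phi(v)=\phi\bigl((\overline{A}-\lambda I)^{m}v\bigr)$.

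Finally I would specialize to the hypothesis: if $(\overline{A}-\lambda I)^m v=0$, then the right-hand side of the identity above is $\phi(0)=0$, whence $(A-\lambda I)^m v^\otimes=0$, as desired. This shows that $\phi$ carries the generalized $\lambda$-eigenspace of $\overline{A}$ into the generalized $\lambda$-eigenspace of $A$, proving the proposition.

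As for where the difficulty lies, there is essentially no obstacle once the intertwining identity is in hand, since everything afterward is purely formal. The only genuine content is the identity $A\circ\phi=\phi\circ\overline{A}$ itself, which is precisely the block computation already recorded in the excerpt. The one subtlety to double-check is that the off-diagonal block $a_{ij}\ind$ acts on the constant sub-vector $(x_j)_{k_j}$ by summing its $k_j$ equal entries, producing the factor $a_{ij}k_j x_j$ that appears as the $(i,j)$ entry of $\overline{A}$, while the circulant diagonal block $C_i$ acts on the constant sub-vector $(x_i)_{k_i}$ as multiplication by its row sum $C_{is}$; both facts are already implicit in the displayed formula for $Av$.
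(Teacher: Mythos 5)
Your proof is correct and takes essentially the same approach as the paper: both rest on the intertwining identity $(A-\lambda I)v^\otimes=\left[(\overline A-\lambda I)v\right]^\otimes$ (valid for all $v\in\C^d$, which the paper records as its preliminary equation) followed by induction on $m$. The only cosmetic difference is that you propagate the full identity $(A-\lambda I)^m v^\otimes=\left[(\overline A-\lambda I)^m v\right]^\otimes$ through the induction and specialize at the end, whereas the paper inducts directly on the vanishing implication; the content is identical.
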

\begin{proof}
Note preliminarily that, by the construction of the matrix $\overline A$, for any $v\in\C^d$ and any $\lambda\in\C$
\begin{equation}\label{eq:tensor expansion}
\left[(\overline A-\lambda I)v\right]^\otimes = (A-\lambda I)v^\otimes.
\end{equation}
We proceed by induction on $m$. The case $m=1$, that is, of ordinary eigenvectors, is a direct consequence of Equation \eqref{eq:tensor expansion}.
Now suppose by inductive hypothesis that for $w\in\C^d$ and $\lambda\in Spec(\overline A)$
\begin{equation}
\label{eq:generalized eigenvectors}
(\overline A-\lambda I)^{m-1}w=0 \Rightarrow (A-\lambda I)^{m-1}w^\otimes=0,
\end{equation}
and let $v\in\C^d$ satisfy $(\overline A-\lambda I)^mv=0$. Then $w=(\overline A-\lambda I)v$ satisfies the premise of \eqref{eq:generalized eigenvectors}.
Consequently,
\[
(A-\lambda I)^mv^\otimes=(A-\lambda I)^{m-1}\left((A-\lambda I)v^\otimes\right)\stackrel{\eqref{eq:tensor expansion}}{=}(A-\lambda I)^{m-1}w^\otimes\stackrel{\eqref{eq:generalized eigenvectors}}{=}0.
\]
\end{proof}
\begin{prop}
Let $\{u_1,\dots,u_d\}$ be a basis of generalized eigenvectors of $\overline A$. Then the set made of the circulant eigenvectors $w_{i,j}$ of $A$ introduced in Proposition \ref{prop:circulant eigenvectors}, together with $u_1^\otimes,\dots,u_d^\otimes$, is linearly independent.
\end{prop}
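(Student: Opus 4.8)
The plan is to realize the claimed linearly independent set as the columns of a single $n\times n$ matrix and to deduce independence by a direct appeal to Lemma \ref{lem:det(M) and det(X)}. First I would let $X$ be the $d\times d$ matrix whose $\ell$-th column is the generalized eigenvector $u_\ell$ of $\overline A$. Since $\{u_1,\dots,u_d\}$ is a \emph{basis} of generalized eigenvectors of $\overline A$, it is in particular a basis of $\C^d$ (a full system of generalized eigenvectors coming from the Jordan decomposition always spans the whole space), so $X$ is non-singular and $\det(X)\neq 0$. The key observation is that each tensor expansion $u_\ell^\otimes$ is precisely the tensor expansion of the $\ell$-th column of $X$ in the sense used in Lemma \ref{lem:det(M) and det(X)}.

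Next I would assemble the matrix $M$ described in that lemma, whose columns interleave the tensor expansions $u_\ell^\otimes$ with the circulant eigenvectors $w_{i,j}$. A count confirms the sizes match: there are $\sum_{i=1}^d(k_i-1)=n-d$ circulant eigenvectors and $d$ tensor expansions, so $M$ is genuinely $n\times n$, and its columns are exactly the set whose independence we must establish (reordering columns alters $\det M$ only by a sign and does not affect linear independence). Applying the lemma then yields
\[
\det(M)=\det(E_{k_1})\cdots\det(E_{k_d})\,\det(X).
\]
Each $E_{k_i}$ is the Vandermonde matrix in the distinct nodes $1,\omega_{k_i},\dots,\omega_{k_i}^{k_i-1}$, so $\det(E_{k_i})\neq 0$ (cf. Remark \ref{rem:Fourier modes}), while $\det(X)\neq 0$ by the previous step. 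Hence $\det(M)\neq 0$, the columns of $M$ are linearly independent, and the statement follows at once.

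I do not expect a serious obstacle here, since essentially all of the computational content has already been isolated in Lemma \ref{lem:det(M) and det(X)}. The only genuine points to verify are that a basis of generalized eigenvectors of $\overline A$ is indeed a basis of $\C^d$ (guaranteeing $\det X\neq 0$), and that the interleaved column ordering prescribed by the lemma corresponds, as an unordered set, to the system in the statement. I would also reconcile the harmless index clash between the notation $w_{i,j}$ of Proposition \ref{prop:circulant eigenvectors} and the $w_{j,i}$ appearing in the lemma, but this is pure bookkeeping.
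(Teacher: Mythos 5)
Your proof is correct, but it takes a genuinely different route from the paper. You realize the whole system as the columns of the matrix $M$ of Lemma \ref{lem:det(M) and det(X)} with $X=(u_1\,|\,\cdots\,|\,u_d)$, note $\det X\neq 0$ because a basis of generalized eigenvectors of $\overline A$ is in particular a basis of $\C^d$, and conclude $\det M=\det(E_{k_1})\cdots\det(E_{k_d})\det(X)\neq 0$. The paper, somewhat curiously, never invokes that lemma in this proof even though it is stated just beforehand: it argues instead that $\mathrm{span}\{u_1^\otimes,\dots,u_d^\otimes\}$ is contained in the block-constant subspace $U=\{y_1=\dots=y_{k_1},\ \dots,\ y_{k_1+\dots+k_{d-1}+1}=\dots=y_n\}$, and that no nontrivial linear combination of the circulant eigenvectors can lie in $U$: since each $w_{i,j}$ is supported on the $i$-th block, the question reduces block by block, and on a single block a relation $\sum_{j}\alpha_{i,j}w_{i,j}=(c,\dots,c,0,\dots,0)^T$ would force a nontrivial dependence among $v_{k_i,0},v_{k_i,1},\dots,v_{k_i,k_i-1}$, contradicting Remark \ref{rem:Fourier modes}; triviality of this intersection, combined with the independence of each family separately, gives the claim. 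Both arguments bottom out in the same Vandermonde nonsingularity, but they buy different things: your determinant route is shorter given the lemma and yields the sharper quantitative statement that $M$ is nonsingular if and only if $X$ is (precisely the mechanism behind ``$A$ is diagonalizable iff $\overline A$ is''), while the paper's span-intersection argument is more structural and self-contained, avoiding the column bookkeeping and the inductive Laplace computation hidden in the lemma. The two verification points you flag --- that $\det X\neq 0$, and that the lemma's interleaved column ordering matches the statement's set up to a sign and the $w_{i,j}$ versus $w_{j,i}$ index swap --- are indeed the only ones needed, and you handle both correctly.
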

\begin{proof}
We claim that $\mathrm{span}\{w_{i,j}\mid i=1,\dots,d,j=1,\dots,k_i\}\cap\mathrm{span}\{u_1^\otimes,\dots,u_d^\otimes\}=\{0\}$. In fact, the latter span is included in the subspace $U=\{v=(y_1,\dots,y_n)\in\C^n\mid y_1=\dots=y_{k_1}, y_{k_1+1}=\dots=y_{k_1+k_2},\dots,y_{k_1+\dots+k_{d-1}+1}=\dots=y_{k_1+\dots+k_d}\}$. If by contradiction we assume a nontrivial linear combination $\sum_{i=1}^d\sum_{j=1}^{k_i}\alpha_{i,j}w_{i,j}$ to lie in $U$, then by direct inspection each partial linear combination $\sum_{j=1}^{k_i}\alpha_{i,j}w_{i,j}$ (with fixed $i$) has to lie in $U$.
Suppose, without loss of generality, that the partial linear combination $\sum_{j=1}^{k_1}\alpha_{1j}w_{1,j}$ is nontrivial. Then, for some $c\in\C$, \[\sum_{j=1}^{k_1}\alpha_{1,j}w_{1,j}=(\underbrace{c,c,\dots,c}_{k_1},0,\dots,0)^T,\] which implies a nontrivial linear relation between $v_{k_1,0}, v_{k_1,1}, v_{k_1,k_1-1}$, in contradiction with Remark \ref{rem:Fourier modes}.

\end{proof}
Now a counting argument on dimensions shows that there is no room for any (generalized) eigenvector of $A$ other than the circulant eigenvectors and the tensor expansions of the (generalized) eigenvectors of $\overline A$. We collect several direct consequences of this fact.
\begin{cor} \label{prop:diagonal_n}
$A$ is diagonalizable if and only if $\overline A$ is. In particular, if $\overline A$ is diagonalizable with eigenvalue-eigenvector pairs $(\lambda_1,v_1),\dots,(\lambda_d,v_d)$,
then $A$ is diagonalizable with the following system of eigenvalue-eigenvector pairs:
\[
\begin{array}{c}
(\lambda_j,v_j^\otimes) \text{ for }1 \leq j \leq d,\\
(\lambda^{C_1}_j, w_{1,j}) \text{ for }1 \leq j \leq k_1-1,\\
(\lambda^{C_2}_j, w_{2,j}) \text{ for }1 \leq j \leq k_2-1,\\
\dots\\
(\lambda^{C_d}_j, w_{d,j}) \text{ for }1 \leq j \leq k_d-1.\\
\end{array}  \]
\end{cor}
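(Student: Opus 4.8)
The plan is to prove Corollary \ref{prop:diagonal_n} by counting dimensions, using the two preceding propositions as the essential inputs. The previous proposition establishes that the set $\{w_{i,j}\}\cup\{u_1^\otimes,\dots,u_d^\otimes\}$ is linearly independent whenever $\{u_1,\dots,u_d\}$ is a basis of generalized eigenvectors of $\overline A$. Counting elements, there are $\sum_{i=1}^d(k_i-1)=n-d$ circulant eigenvectors $w_{i,j}$ together with exactly $d$ tensor-expanded vectors $u_\ell^\otimes$, for a total of $n$ linearly independent vectors in $\C^n$. Hence this system is automatically a basis of $\C^n$, and there is no room for any further (generalized) eigenvector outside the span of these two families. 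This is the ``counting argument on dimensions'' referred to in the text immediately before the statement.

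With this spanning fact in hand, I would argue the biconditional as follows. If $\overline A$ is diagonalizable, choose $\{u_1,\dots,u_d\}$ to be an honest eigenbasis of $\overline A$, say $\overline A u_\ell=\lambda_\ell u_\ell$. By Equation \eqref{eq:tensor expansion} (or the $m=1$ case of the tensor-expansion proposition), each $u_\ell^\otimes$ is an eigenvector of $A$ with the same eigenvalue $\lambda_\ell$, while each $w_{i,j}$ is an eigenvector of $A$ by Proposition \ref{prop:circulant eigenvectors}. The previous proposition then guarantees these $n$ eigenvectors are linearly independent, so they form an eigenbasis of $\C^n$, which is exactly the displayed list of eigenvalue-eigenvector pairs; thus $A$ is diagonalizable.

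For the converse, I would argue the contrapositive. Suppose $\overline A$ is not diagonalizable. Then, picking a basis $\{u_1,\dots,u_d\}$ of generalized eigenvectors of $\overline A$ in Jordan form, some $u_\ell$ is a genuine generalized (non-ordinary) eigenvector, so that $(\overline A-\lambda I)u_\ell\neq 0$ while $(\overline A-\lambda I)^mu_\ell=0$ for some $m\ge 2$. By the tensor-expansion proposition, $u_\ell^\otimes$ is a corresponding generalized eigenvector of $A$, and by Equation \eqref{eq:tensor expansion} it satisfies $(A-\lambda I)u_\ell^\otimes=\big[(\overline A-\lambda I)u_\ell\big]^\otimes\neq 0$, so $u_\ell^\otimes$ is not an ordinary eigenvector of $A$. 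Since the full list $\{w_{i,j}\}\cup\{u_\ell^\otimes\}$ spans $\C^n$ and $A$ cannot be diagonalizable while possessing an honest generalized eigenvector in a spanning Jordan-type basis, $A$ fails to be diagonalizable as well.

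The main obstacle, and the point requiring the most care, is justifying that the list of $n$ independent vectors genuinely \emph{exhausts} all (generalized) eigenvectors of $A$, so that no diagonalizing basis can be hiding outside this family. The clean way to handle this is to observe that $\C^n$ decomposes as the direct sum of the generalized eigenspaces of $A$, that this list of $n$ independent generalized eigenvectors must therefore be a full generalized eigenbasis, and that $A$ is diagonalizable precisely when every block in this decomposition consists of ordinary eigenvectors. Tracking how the Jordan structure of $\overline A$ transfers faithfully to $A$ under tensor expansion — in particular that the chain lengths are preserved by \eqref{eq:tensor expansion} — is what makes the ``if and only if'' tight, rather than merely a statement about dimensions of eigenspaces.
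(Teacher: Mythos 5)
Your proposal is correct and follows essentially the paper's own route: the paper derives this corollary as a direct consequence of the same dimension count ($n-d$ circulant eigenvectors plus $d$ tensor expansions of a generalized eigenbasis of $\overline A$, linearly independent by the preceding proposition and hence a generalized eigenbasis of $\C^n$), which is exactly your first paragraph. Your contrapositive for the ``only if'' direction --- using Equation \eqref{eq:tensor expansion} to see that tensor expansion sends a non-ordinary generalized eigenvector of $\overline A$ to a non-ordinary generalized eigenvector of $A$, which rules out diagonalizability of $A$ --- is precisely the detail the paper leaves implicit in its phrase ``counting argument on dimensions,'' so you have filled in rather than departed from the intended argument.
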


\begin{cor}
The reduced characteristic polynomial of $A$ coincides with the characteristic polynomial of $\overline A$, namely
\[ \overline{p}_{A}(X)=p_{\overline A}(X) .\] 
\end{cor}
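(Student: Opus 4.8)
The plan is to read off both polynomials from the full generalized eigenbasis of $A$ assembled in the preceding results, using the elementary fact that the characteristic polynomial of an operator is the product of the factors $(X-\mu)$ taken over any generalized eigenbasis, each vector contributing its associated eigenvalue. Concretely, the counting argument established just above shows that the $n-d$ circulant eigenvectors $w_{i,j}$, together with the tensor expansions $u_1^\otimes,\dots,u_d^\otimes$ of a generalized eigenbasis $\{u_1,\dots,u_d\}$ of $\overline A$, form a generalized eigenbasis of $A$. This is the only structural input required.

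First I would expand $A$ in this basis. Since every basis vector is a generalized eigenvector, $A$ is similar to a block upper-triangular matrix whose diagonal entries are the associated eigenvalues; hence
\[
p_A(X)=\left(\prod_{\substack{1\le i\le d\\ 1\le j\le k_i-1}}\bigl(X-\lambda_j^{C_i}\bigr)\right)\cdot\prod_{\ell=1}^d\bigl(X-\mu_\ell\bigr),
\]
where $\mu_\ell$ denotes the eigenvalue of $\overline A$ to which the generalized eigenvector $u_\ell$ belongs. Here I use that the tensor expansion preserves eigenvalues, which follows by applying Equation \eqref{eq:tensor expansion} inductively, exactly as in the proposition on tensor expansions of generalized eigenspaces.

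Next I would identify the second factor. Because $\{u_1,\dots,u_d\}$ is itself a generalized eigenbasis of $\overline A$, the same principle applied to $\overline A$ gives $p_{\overline A}(X)=\prod_{\ell=1}^d (X-\mu_\ell)$. The first factor above is precisely the denominator appearing in the definition of the reduced characteristic polynomial, so dividing yields
\[
\overline p_A(X)=\frac{p_A(X)}{\prod_{i,j}\bigl(X-\lambda_j^{C_i}\bigr)}=\prod_{\ell=1}^d\bigl(X-\mu_\ell\bigr)=p_{\overline A}(X),
\]
which is the claimed identity; in particular the multiset $\{\lambda_1,\dots,\lambda_d\}$ of non-circulant eigenvalues of $A$ coincides, with multiplicity, with $\mathrm{Spec}(\overline A)$.

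The only point requiring genuine care is the bookkeeping of algebraic multiplicities: one must be sure that the number of generalized eigenbasis vectors of $A$ carrying a given value $\mu$ equals the algebraic multiplicity of $\mu$ in $p_A$, and likewise for $\overline A$. This is guaranteed once one knows the vectors in question genuinely form a basis — so that $A$ really is similar to the triangular form above — which is exactly what the linear-independence proposition together with the dimension count supply, combined with the injectivity of the map $v\mapsto v^\otimes$ ensuring the $d$ vectors $u_\ell^\otimes$ remain independent and each contributes a single diagonal factor. No computation beyond these already-proven facts is needed.
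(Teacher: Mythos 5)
Your proposal is correct and matches the paper's approach: the paper states this corollary as a direct consequence of the fact that the circulant eigenvectors together with the tensor expansions of a generalized eigenbasis of $\overline A$ form a generalized eigenbasis of $A$, which is exactly the structural input you use. Your write-up merely makes explicit the multiplicity bookkeeping (each generalized eigenbasis vector contributes one factor $(X-\mu)$ to $p_A$, with the count per eigenvalue forced to equal the algebraic multiplicity by the dimension count, and with $v\mapsto v^\otimes$ injective so that $u_\ell^\otimes$ carries the eigenvalue $\mu_\ell$), which the paper leaves implicit.
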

 
\section{Some applications to network theory}
In this section, we apply the main results to study the spectrum of several (directed) graphs by the join and edge-removal procedures. In particular, we provide a conceptual explanation for the spectrum of the graph described in the second section.

First, we recall a graph construction, namely the join construction (see \cite[Chapter 2]{[Harary]} and \cite{[Zykov]}).
\begin{definition}
Let $G=(V(G),E(G)), H=(V(H),E(H))$ be two graphs. The join of $G$ and $H$, denoted by $G + H$, is the graph with vertex set $V = V (G) \cup V (H)$, and in which two vertices $u$ and $v$ are adjacent if and only if
\begin{itemize}
\item $u, v \in V(G)$ and $uv \in E(G)$.
\item $u,v \in V(H)$ and $uv \in E(H)$.
\item $u \in V(G)$ and $v \in V(H)$.
\item $u \in V(H)$ and $v \in V(G)$.
\end{itemize}
\end{definition} 
Here is a pictorial illustration: 
\begin{center}
\includegraphics[trim={0 15cm 0 5cm},clip,scale=0.1]{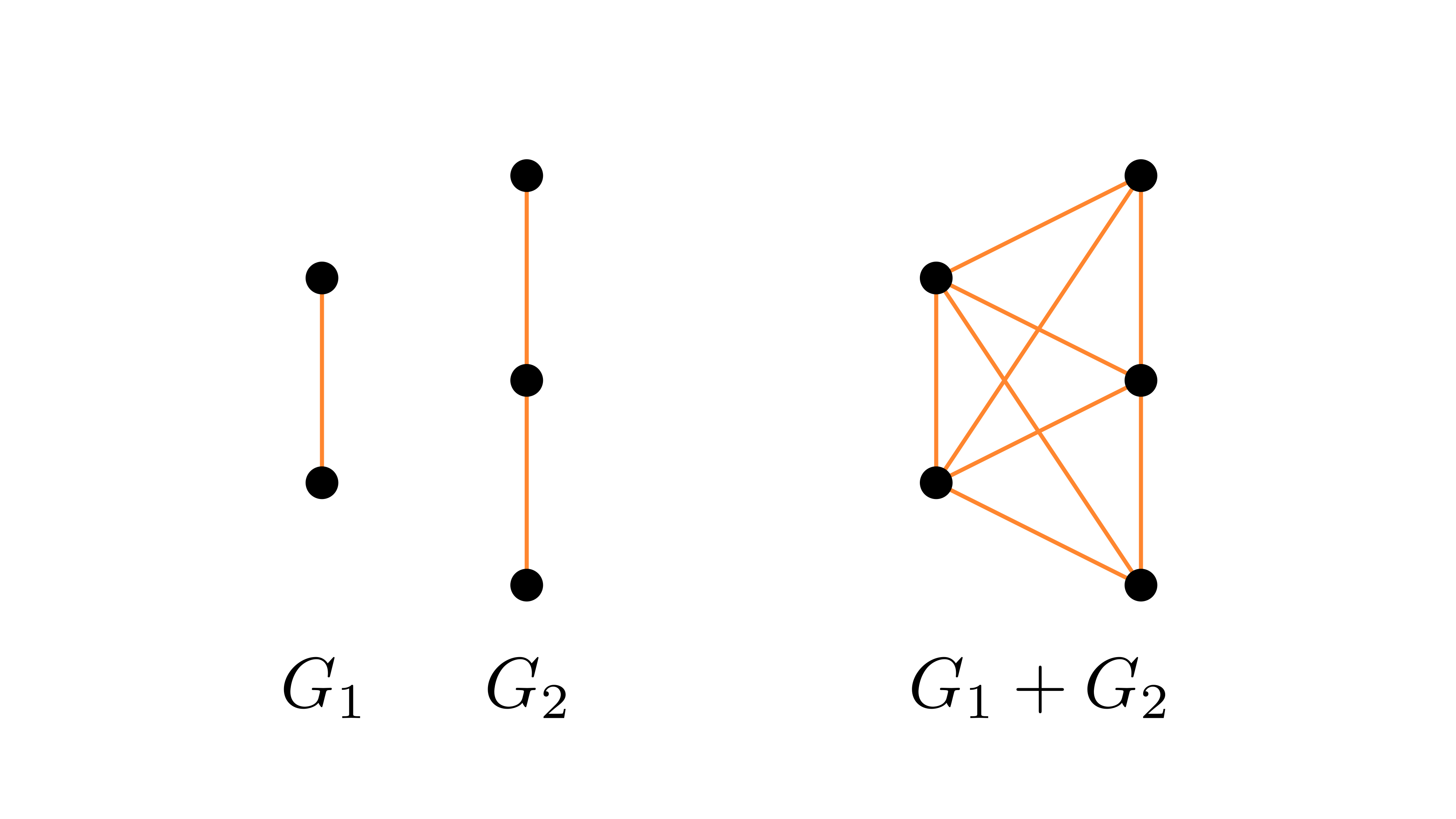}
\end{center}

Let $A(G)$ and $A(H)$ be the adjacency matrices of $G$ and $H$ respectively. Then the adjacency matrix of $G + H$ is given by 
\[ A(G + H)=\begin{pmatrix}
   A(G) & \ind_{k_1, k_2} \\
   \ind_{k_2, k_1}  &  A(H)
\end{pmatrix}, \] 
with $k_1=|V(G)|$ and $k_2=|V(H)|$. Therefore, in any case in which $A(G)$ and $A(H)$ are circulant, the spectrum of $A(G+H)$ is completely determined by Propositions \ref{prop:block_circulant}, \ref{prop:converse}, \ref{prop:explicit eigenvalues}. Here are some interesting instances.
\begin{expl}[Ring graphs]
For two positive integers $k,m$, the \textit{ring graph} $\RG(k,m)$ is the undirected graph whose $k$ vertices can be arranged in a circle in such a way that each vertex is connected to its $m$ closest neighbours on each side (with the understanding that, for $k\leq2m+1$, $\RG(k,m)$ is the complete graph $K_k$). In particular, ring graphs are regular. We choose a total order of the vertices which goes along the aforementioned circle. This produces a circulant adjacency matrix, whose eigenvalue corresponding to the eigenvector $(1,1,\dots,1)^T$ is the graph valency $2m$.

Consequently, if $k_1>2m_1+1$ or $k_2>2m_2+1$, the spectrum of $\RG(k_1,m_1) + \RG(k_2,m_2)$ is the union of three multisets 
\[(\Spec(\RG(k_1,m_1) \setminus \{2m_1 \}) \cup  (\Spec(\RG(k_2,m_2) \setminus \{2m_2 \}) \cup \{\lambda_1, \lambda_2 \}  ,\]
with
\begin{align*}
    \lambda_1, \lambda_2 &=\frac{(2m_1+2m_2) \pm \sqrt{(2m_1-2m_2)^2+4k_1k_2}}{2}\\
    &=m_1+m_2 \pm \sqrt{(m_1-m_2)^2+k_1k_2}.
\end{align*}

For the sake of completion, if $k_1\geq2m_1+1$ and $k_2\geq2m_2+1$, then clearly $\RG(k_1,m_1) + \RG(k_2,m_2)$ is the complete graph on $k_1+k_2$ vertices, so its spectrum is well known.
\end{expl}

\begin{expl}[Cycle removal 1]
Let us consider the graph obtained by removing an undirected cycle of length $k$ from the complete graph $K_n$ with $n>k$. Up to a reordering of the vertices, the resulting graph is the join of a circulant graph $G$, with $k$ vertices and $k-3$ edges and with adjacency matrix $A(G)=Circ(0,0,1,1\dots,1,0)$,
and the complete graph $H=K_{n-k}$.
Since
\[
\Spec(G) = \left\{\left[\sum_{r=2}^{k-2}\omega_k^{rj}\right]_1\;\middle|\; j = 0,\dots,k\right\}
\]
and
\[ \Spec(H)=\{ [-1]_{n-k-2}, [n-k-1]_{1} \}, \]
with lower indices after square brackets denoting algebraic multiplicity, the spectrum of $G+H$ is the multiset
\[
\left\{\left[\sum_{r=2}^{k-2}\omega_k^{rj}\right]_1\;\middle|\; j = 1,\dots,k\right\}\cup\{[-1]_{n-k-2}\}\cup\{\lambda_1,\lambda_2\}
\]
with
\[ \lambda_1, \lambda_2= \frac{(n-4)+\sqrt{(n+2)^2-8k}}{2}.\]

\end{expl}

\begin{expl}[Cycle removal 2]
Similarly, the graph obtained by removing a directed cycle of length $k$ from the complete graph $K_n$ with $n>k$ is the join of a circulant graph $G$, with $k$ vertices and $k-2$ edges and with adjacency matrix $A(G)=Circ(0,1,1\dots,1,0)$, and the complete graph $H=K_{n-k}$.
Its spectrum is the multiset
\[
\left\{\left[\sum_{r=2}^{k-1}\omega_k^{rj}\right]_1\mid j = 1,\dots,k\right\}\cup\{[-1]_{n-k-2}\}\cup\{\lambda_1,\lambda_2\}
\]
with
\[ \lambda_1, \lambda_2= \frac{(n-3)+\sqrt{(n+1)^2-4k}}{2}.\]

\end{expl}

\section{Applications to non-linear dynamics on oscillator networks}
To illustrate potential applications of this approach, we can now consider a dynamical system on the join of several circulant graphs. Specifically, we consider oscillators coupled on a graph on a matrix $A$, defined by joining $d$ identical circulant graphs. We consider the Kuramoto model:
\begin{equation}
    \frac{d\theta_{i}}{dt} = \omega_{i} + \epsilon \sum\limits_{j=1}^{N} A_{ij} \sin{(\theta_{j} - \theta_{i})},
    \label{eq:main_kuramoto}
\end{equation}
which is a central tool in the description of synchronization in nature, from the behavior of insects (see \cite{[buck1998]}, \cite{[ermentrout]}), patterns of social behavior (see \cite{[pulchino1]}, \cite{[pulchino2]}), neural systems (see \cite{[breakspear1]}, \cite{[cabral]}), and physical systems (see \cite{[val]}, \cite{[wie]}). Here, $\theta_i$ is the state of oscillator $i \in [1,N]$ at time $t$, $\omega_i$ is the intrinsic angular frequency, $\kappa$ scales the coupling strength, and element $a_{ij}$ represents the weighted connection between oscillators $i$ and $j$. We focus on the case where all oscillators have the same natural frequency, that is, $\omega_i= \omega~\text{for all}~i \in [1,N]$. Under this condition, we can assume further that $\omega=0.$

An important question in this area is the study of  equilibrium points on a network of Kuramoto oscillators (see \cite{[Taylor12]}, \cite{[Townsend]}). Furthermore, it is known that the stability of these equilibrium points depends strongly on the specific pattern of connections, highlighting the importance of the network's structure on the Kuramoto dynamics (see \cite{[Townsend]}). In \cite{[KO4]}, we utilize a algebraic approach to study equilibrium points of this dynamical system. By studying a related complex-valued model introduced in \cite{[KO1]}, we prove the following theorem. 

\begin{thm} (See \cite[Proposition 2] {[KO4]}) \label{prop:twisted_state_KM}
Suppose $\bm{x}_0=e^{\i \bm{\theta}_0}$ is an eigenvector of $\bm{A}=(a_{ij})$ associated with a real eigenvalue $\lambda$. Then $\bm{\theta}_0=(\theta_1, \theta_2, \ldots, \theta_N)$ is an equilibrium point of the following Kuramoto model.
\[ \frac{d\theta_i}{dt} = \epsilon \sum_{j=1}^{N} a_{ij} \sin( \theta_j - \theta_i ). \] 
\end{thm}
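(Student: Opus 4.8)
The plan is to show that the right-hand side of the Kuramoto equation vanishes identically at $\bm{\theta}_0$, i.e.\ that $\sum_{j=1}^N a_{ij}\sin(\theta_j-\theta_i)=0$ for every index $i$. The guiding idea is to recognize this real trigonometric sum as the imaginary part of a single complex quantity that the eigenvector hypothesis pins down exactly. Throughout I assume, as is natural for a coupling matrix, that the entries $a_{ij}$ are real.

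First I would transcribe the hypothesis $\bm{A}\bm{x}_0=\lambda\bm{x}_0$ into coordinates. Since $(\bm{x}_0)_j=e^{\i\theta_j}$, the $i$-th component reads
\[ \sum_{j=1}^N a_{ij}\,e^{\i\theta_j}=\lambda\,e^{\i\theta_i}. \]
Next I would multiply both sides by the unit-modulus factor $e^{-\i\theta_i}$ to center everything at oscillator $i$, obtaining
\[ \sum_{j=1}^N a_{ij}\,e^{\i(\theta_j-\theta_i)}=\lambda. \]
Then, expanding via Euler's formula $e^{\i\phi}=\cos\phi+\i\sin\phi$, the left-hand side splits into $\sum_{j} a_{ij}\cos(\theta_j-\theta_i)+\i\sum_{j} a_{ij}\sin(\theta_j-\theta_i)$, while the right-hand side is the real scalar $\lambda$.

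Finally, comparing imaginary parts of the two sides yields precisely $\sum_{j=1}^N a_{ij}\sin(\theta_j-\theta_i)=0$ for each $i$, so that $\tfrac{d\theta_i}{dt}=0$ at $\bm{\theta}_0$ and $\bm{\theta}_0$ is an equilibrium point, as claimed. The computation is routine; the only genuine subtlety---and the place where the hypotheses do real work---is the appeal to reality. The argument succeeds exactly because both $a_{ij}$ and $\lambda$ are real: if $\lambda$ had nonzero imaginary part, the comparison of imaginary parts would instead give $\sum_{j} a_{ij}\sin(\theta_j-\theta_i)=\mathrm{Im}(\lambda)\neq 0$, and the equilibrium conclusion would collapse. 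Thus the reality of the eigenvalue is not a technical convenience but the crux of the statement, and I would make a point of emphasizing it.
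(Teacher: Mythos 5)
Your proposal is correct and matches the intended argument: the paper gives no inline proof, deferring to \cite[Proposition 2]{[KO4]}, where the result is established by precisely this computation --- writing the eigenvector equation componentwise as $\sum_{j=1}^N a_{ij}e^{\i\theta_j}=\lambda e^{\i\theta_i}$, multiplying by $e^{-\i\theta_i}$, and reading off the imaginary part, using that $a_{ij}$ and $\lambda$ are real. Your explicit flagging of the reality hypotheses is apt, since the theorem statement leaves the realness of $\bm{A}$ implicit; in the paper's application the blocks are real symmetric circulant matrices, so both hypotheses hold.
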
 
We will now use this result and the main theorem of this article to construct networks with interesting equilibrium points. More precisely, let $C$ be a real symmetric circulant matrix of size $k \times k$. Let $A$ be a join of $d$-identical copies of $C$, namely $A$ is a network with the following weighted adjacency matrix 

\[
A=\left(\begin{array}{c|c|c|c}
C & a_{1,2}\ind & \cdots & a_{1,d}\ind \\
\hline
a_{2,1}\ind & C & \cdots & a_{2,d}\ind \\
\hline
\vdots & \vdots & \ddots & \vdots \\
\hline
a_{d,1}\ind & a_{d,2}\ind & \cdots & C
\end{array}\right),
\]
Let $(\varphi_1, \varphi_2, \ldots, \varphi_d) \in [-\pi, \pi]^d$. For each $1 \leq j \leq k-1$ and $1 \leq i \leq d$ let us define
\[\omega_{i,j}^{(\varphi_i)}= e^{\i \varphi_i} \omega_{i,j}, \]
where $\omega_{i,j}$ is an eigenvector of $A$ associated with the eigenvalue $\lambda_{j}^C$ as described in Proposition \ref{prop:circulant eigenvectors}. Because these eigenvectors for a fixed $j$ are associated with a single eigenvalue, their sum $\sum_{i=1}^d \omega_{i,j}^{(\varphi_i)}$
is also an eigenvector associated with the eigenvalue $\lambda^{C}_j$. Note further that by the definition of $\omega_{i,j}$, we have 
\[ \sum_{i=1}^d \omega_{i,j}^{(\varphi_i)}=e^{\i \bm{\theta}_{0,j}^{(\varphi_1, \ldots, \varphi_d)}}, \] 
where 
\[ \bm{\theta}_{0,j}^{(\varphi_1, \ldots, \varphi_d)}=\left(\varphi_1, \frac{2 \pi j}{k}+\varphi_1, \ldots, \frac{2 \pi(k-1)j}{k}+\varphi_1, \ldots, \varphi_d, \frac{2 \pi j}{k}+\varphi_d, \ldots, \frac{2 \pi(k-1)j}{k}+\varphi_d \right)^T. \] 

By Theorem \ref{prop:twisted_state_KM}, we conclude that 
\begin{prop}
For all $1 \leq j \leq k-1$ and $(\varphi_1, \varphi_2, \ldots, \varphi_d) \in [-\pi, \pi]^d$, $\bm{\theta}_{0,j}^{(\varphi_1, \ldots, \varphi_d)}$ is an equilibrium point of the KM associated with the adjacency matrix $A$.

\end{prop}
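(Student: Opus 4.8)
The plan is to verify the two hypotheses of Theorem \ref{prop:twisted_state_KM} for the vector $\bm{x}_0 = e^{\i \bm{\theta}_{0,j}^{(\varphi_1, \ldots, \varphi_d)}}$ and then invoke that theorem directly. Concretely, I need to check (i) that $\bm{x}_0$ is an eigenvector of $A$, and (ii) that its associated eigenvalue is \emph{real}, since Theorem \ref{prop:twisted_state_KM} applies only to real eigenvalues.

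For (i), I would recall the computation already assembled above. By Proposition \ref{prop:circulant eigenvectors}, each $\omega_{i,j}$ is an eigenvector of $A$ for the eigenvalue $\lambda_j^C$; hence so is each scalar multiple $e^{\i\varphi_i}\omega_{i,j}$, and, the $\lambda_j^C$-eigenspace being a linear subspace, so is the sum $\sum_{i=1}^d \omega_{i,j}^{(\varphi_i)}$—provided it is nonzero. Nonvanishing is immediate, as every entry of this sum has modulus one, being a product of the unimodular factors $e^{\i\varphi_i}$ and $\omega_k^{\ell j}$. The identification $\sum_{i=1}^d \omega_{i,j}^{(\varphi_i)} = \bm{x}_0$ is then an entrywise check: in the $i$-th block the entry in position $\ell$ ($0 \leq \ell \leq k-1$) equals $e^{\i(\varphi_i + 2\pi\ell j/k)}$, whose argument $\varphi_i + 2\pi\ell j/k$ is exactly the corresponding coordinate of $\bm{\theta}_{0,j}^{(\varphi_1,\ldots,\varphi_d)}$.

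For (ii), I would invoke the hypothesis that $C$ is real symmetric. Such a matrix has only real eigenvalues, and by the Circulant Diagonalization Theorem \ref{prop: CDT} the scalar $\lambda_j^C$ is one of them; hence $\lambda_j^C \in \R$. (Concretely, symmetry forces $c_\ell = c_{k-\ell}$, so in the defining sum for $\lambda_j^C$ the conjugate terms pair up and the imaginary parts cancel.) This realness is the only point at which symmetry of $C$ is genuinely used: without it $\lambda_j^C$ could be properly complex and Theorem \ref{prop:twisted_state_KM} would not apply.

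Having verified both hypotheses, I would conclude by a one-line appeal to Theorem \ref{prop:twisted_state_KM}, which gives at once that $\bm{\theta}_{0,j}^{(\varphi_1,\ldots,\varphi_d)}$ is an equilibrium point of the Kuramoto model attached to $A$. I anticipate no serious obstacle: the substantive work—showing that the phase-shifted circulant eigenvectors sum to a unimodular eigenvector—has already been done in the paragraphs preceding the statement, so the proof amounts to the realness observation in (ii) together with the direct citation of the theorem.
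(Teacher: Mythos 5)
Your proposal is correct and follows essentially the same route as the paper: the paper's argument is exactly the discussion preceding the proposition (the phase-shifted circulant eigenvectors $\omega_{i,j}^{(\varphi_i)}$ share the eigenvalue $\lambda_j^C$, so their sum is the unimodular eigenvector $e^{\i\bm{\theta}_{0,j}^{(\varphi_1,\ldots,\varphi_d)}}$), followed by a direct appeal to Theorem \ref{prop:twisted_state_KM}. Your explicit checks that the sum is nonzero and that $\lambda_j^C\in\R$ (from $C$ being real symmetric) are details the paper leaves implicit, and they are verified correctly.
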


\section*{Acknowledgments}
\noindent This work was supported by BrainsCAN at Western University through the Canada First Research Excellence Fund (CFREF), the NSF through a NeuroNex award (\#2015276), the Natural Sciences and Engineering Research Council of Canada (NSERC) grant R0370A01, SPIRITS 2020 of Kyoto University, Compute Ontario (computeontario.ca), and Compute Canada (computecanada.ca). J.M.~gratefully acknowledges the Western University Faculty of Science Distinguished Professorship in 2020-2021.

\end{document}